\newcommand{\kk}{{\mathbbm k}}
\tikzset{big arrow/.style={
    -Stealth,line cap=round,line width=1mm,
    shorten <=1mm,shorten >=1mm}}
\numberwithin{equation}{section}
\theoremstyle{plain}
\newtheorem{lemma}[equation]{Lemma}
\newtheorem{proposition}[equation]{Proposition}
\newtheorem{corollary}[equation]{Corollary}
\newtheorem*{proposition*}{Proposition}
\newtheorem*{lemma*}{Lemma}
\theoremstyle{definition}
\newtheorem{remark}[equation]{Remark}
\newcommand{\R}{{\mathds R}}
\newcommand{\C}{{\mathds C}}
\newcommand{\Z}{{\mathds Z}}
\newcommand{\ad}{{\rm ad \,}}
\newcommand{\Ad}{{\rm Ad }}
\newcommand{\z}{{\mathfrak z}}
\newcommand{\g}{{\mathfrak g}}
\newcommand{\gl}{{\mathfrak {gl}}}
\newcommand{\h}{{\mathfrak h}}
\newcommand{\ug}{{\mathfrak u}}  
\newcommand{\su}{\mathfrak{su}}
\newcommand{\ssl}{{\mathfrak {sl}}}
\renewcommand{\t}{{\mathfrak t }}
\newcommand{\lra}{\longrightarrow}
\newcommand{\GL}{{\rm GL}}
\newcommand{\SL}{{\rm SL}}
\newcommand{\hs}{\kern 0.8pt}
\newcommand{\hsss}{\kern 1.2pt}
\newcommand{\hm}{\kern -0.8pt}
\newcommand{\hmm}{\kern -1.2pt}
\newcommand{\hssh}{\kern 1.2pt}
\newcommand{\hshs}{\kern 1.6pt}
\newcommand{\hssss}{\kern 2.0pt}
\newcommand{\into}{\hookrightarrow}
\newcommand{\isoto}{\overset{\sim}{\to}}
\newcommand{\onto}{\twoheadrightarrow}
\newcommand{\labelto}[1]{\xrightarrow{\makebox[1.5em]{\scriptsize ${#1}$}}}
\newcommand{\upalpha}{\hs^\alpha\kern-0.5pt}
\newcommand{\real}{\bf}
\newcommand{\GG}{{\real G}}
\newcommand{\HH}{{\real H}}
\newcommand{\DD}{{\real D}}
\newcommand{\PP}{{\real P}}
\newcommand{\TT}{{\bf T}}
\newcommand{\UU}{{\real U}}
\newcommand{\VV}{{\real V}}
\newcommand{\YY}{{\real Y}}
\newcommand{\wt}{\widetilde}
\DeclareMathOperator{\Lie}{Lie}
\DeclareMathOperator{\Aut}{Aut}
\DeclareMathOperator{\Inn}{Inn}
\DeclareMathOperator{\Out}{Out}
\DeclareMathOperator{\Dyn}{{Dyn}}
\newcommand{\Hom}{{\rm Hom}}
\newcommand{\Gal}{{\rm Gal}}
\newcommand{\G}{{\mathbb G}}
\newcommand{\SmallMatrix}[1]{\text{{\tiny\arraycolsep=0.4\arraycolsep\ensuremath
    {\begin{pmatrix}#1\end{pmatrix}}}}}
\newcommand{\Ho}{{\mathrm{H}\kern 0.3pt}}
\newcommand{\Zl}{{\mathrm{Z}\kern 0.2pt}}
\newcommand{\Bd}{{\mathrm{B}\kern 0.2pt}}
\newcommand{\Stab}{{\rm Stab}}
\newcommand{\inn}{{\rm inn}}
\newcommand{\der}{{\rm der}}
\newcommand{\upgam}{{\hs^\gamma\hm}}
\newcommand{\EEE}{{\real E}}
\newcommand{\AAA}{{\real A}}
\def\Hr{{\rm H}}
\def\Zr{{\rm Z}}
\newcommand{\BRD}{{\rm BRD}}
\def\tg{{\mathfrak t}}
\newcommand{\cR}{{\mathcal R}}
\newcommand{\cS}{{\mathcal S}}
\newcommand{\cP}{{\mathcal P}}
\newcommand{\cD}{{\mathcal D}}
\newcommand{\cF}{{\mathcal F}}
\newcommand{\Tr}{{\rm Tr}}
\newcommand{\Gtz}{{G_t^{\prime\prime\, 0}}}
\newcommand{\Gza}{{G^{0,\ad}}}
\def\AAA{{\sf A}}
\def\DDD{{\sf D}}
\def\EEE{{\sf E}}
\def\XXX{{\sf X}}
\def\GmR{{\G_{{\rm m},\R}}}
\begin{document}

\title[Real two-step nilpotent Lie algebras]
{Real non-degenerate   two-step nilpotent\\ Lie algebras of dimension eight\\
 }

\author{Mikhail  Borovoi}
\address{Borovoi: Raymond and Beverly Sackler School of Mathematical Sciences,
Tel Aviv University, 6997801 Tel Aviv, Israel}
\email{borovoi@tauex.tau.ac.il}

\author{Bodgan Adrian Dina}
\address{Dina: Raymond and Beverly Sackler School of Mathematical Sciences,
Tel Aviv University, 6997801 Tel Aviv, Israel}
\email{bogdanadrian.dina@gmail.com}

\author{Willem A. de Graaf}
\address{De Graaf: Department of Mathematics, University of Trento, Povo (Trento), Italy}
\email{degraaf@science.unitn.it}

\thanks{ Borovoi and Dina were supported
by the Israel Science Foundation (grant 1030/22).}

\date{\today}

\keywords{Two-step nilpotent  Lie algebra, real form, real Galois cohomology}

\subjclass{Primary:
  15A21.  
Secondary:
  11E72  
, 20G05  
, 20G20.
}

\begin{abstract}
We  classify the non-degenerate two-step nilpotent Lie algebras of dimension 8 over the field of real numbers,
using  known results over complex numbers.
We  write explicit structure constants for these real Lie algebras.
\end{abstract}

\maketitle

\tableofcontents

\section{Introduction}

Classification lists of Lie algebras, in particular of nilpotent Lie algebras,
over the field of complex numbers $\C$  and over the field of real numbers $\R$,
appear to be an often used tool in mathematical physics; see, for instance, \cite{PSWZ} and \cite{PP}.
The problem of classification of nilpotent Lie algebras of arbitrary dimensions (even of
two-step nilpotent Lie algebras of arbitrary dimension with 3-dimensional center) is wild; see \cite{BLS05}.
However, it is possible to classify nilpotent Lie algebras in low dimensions.
Up to now, nilpotent Lie algebras over some fields have been classified up to dimension 7.
Lists of nilpotent Lie algebras of dimension at most 7 over different fields
can be found, in particular, in
\cite{ABG}, \cite{gong}, \cite{dG-6}, \cite{magnin}, \cite{Morozov}, \cite{Romdhani}.
There is no known classification of nilpotent algebras of dimensions greater than 7,
even over the field of complex numbers $\C$.

A {\em two-step nilpotent} Lie algebra over a field $\kk$
(synonyms: metabelian Lie algebra, nilpotent Lie algebra of class 2)
is a Lie algebra $L$ over $\kk$ such that
\begin{equation}\label{e:2-step}
[\hs[L,L],L]=0.
\end{equation}
Write $\z(L)$ for the center of $L$:
$$\z(L)=\{x\in L\ |\ [x,y]= 0\ \forall y\in L\}.$$
Set $L^{(1)}=[L,L]$.
Then condition \eqref{e:2-step} means that $L^{(1)}\subseteq \z(L)$.

Two-step nilpotent Lie algebras
form the first non-trivial subclass of nilpotent Lie algebras.
A classification of two-step nilpotent Lie algebras in dimensions up to 7
was given by Gauger \cite{Gauger} over an {\em algebraically closed} field
of characteristic different from 2,
and by Stroppel \cite{Stroppel} over an arbitrary field.

In \cite{GT} Galitski and Timashev introduced an invariant-theoretic approach
to classification of two-step nilpotent Lie algebras,
which allowed them to classify such Lie algebras over $\C$  up
to dimension 9 (in almost all cases).
They reduced the classification of two-step nilpotent Lie algebras
up to dimension 9 to  classification of orbits
of $\SL(m,\C)\times\SL(n,\C)$ in $\bigwedge^2 \C^m\otimes \C^n$ for $(m,n)$
taking values (5,4),  (6,3), and (7,2),
and they solved the classification problems for (5,4) and  (6,3) using the method
of $\theta$-groups due to Vinberg \cite{Vinberg76} and \cite{Vinberg79}.

Later, in the papers \cite{RZh8} and \cite{YD}, the
two-step nilpotent Lie algebras over $\C$ of dimension 8 were classified.
These results seem to be consistent with the results
of Galitski and Timashev \cite{GT}.

In the present paper, using the known classification of 8-dimensional
non-degenerate  two-step nilpotent  Lie algebras over $\C$
(due to Galitski and Timashev  \cite{GT}, and also to Ren and Zhu \cite{RZh8} and to Yan and Deng  \cite{YD})
we obtain a classification  over $\R$.
See the next section for the definition of a {\em non-degenerate}  two-step nilpotent Lie algebra.
We start with known results over $\C$ and use Galois cohomology.
We  compute the Galois cohomology
using the computer program \cite{dG-GalCohom} described in \cite{BG}.
Our main results are Tables 1--3.

We performed our computations using computational
algebra system {\sf GAP}; see \cite{GAP4}.
A small number of computations concerning automorphism groups of lattices
were performed on Magma \cite{Magma}.

The plan of our paper is as follows.
In Section \ref{s:reductions} we reduce our classification problem
to classification of orbits of the group $\GG(\R)=\GL(m,\R)\times \GL(n,\R)$
in the set of non-degenerate tensors $e\in\YY=(\bigwedge^2 \UU)^*\otimes \VV$
where $\UU=\R^m$, $\VV=\R^n$ for the pair $(m,n)$
taking values $(6,2)$, $(5,3)$, and $(4,4)$.
In Section \ref{s:Tables} we give the tables
of representatives of all orbits; this is our main result.

Section \ref{s:real-alg} contains preliminaries
on real algebraic groups and real Galois cohomology.
In Section \ref{s:BRD}, for a connected reductive complex algebraic group $G$,
we describe the action of the automorphism group $\Aut(G)$
on the canonical based root datum $\BRD(G)$.
Section \ref{s:theta} contains preliminaries on $\theta$-representations.
Starting Section \ref{s:Galois}, we compute our tables.
See below the idea of the computations.

In Section  \ref{s:Galois}, we consider a tensor
$e\in \YY$ and the stabilizer $\GG_e$ of $e$ in $\GG$.
We reduce the classification of the orbits of $\GG(\R)$ in $G\cdot e\subset Y$
(where $G=\GG(\C)$ and $Y=\YY\otimes_\R\C)$
to computing the Galois cohomology set $\Hr^1\GG_e$.

In order to compute $\Hr^1\GG_e$\hs, we embed $\g=\Lie \GG$ and $\YY$
into a $\Z$-graded real Lie algebra $\hat\g$
such that $\hat \g_0=\g$ and $\hat \g_1=Y$.
Moreover, we embed our real tensor $e$ into a real homogeneous $\ssl_2$-triple
$t=(e,h,f)$ with $h\in\hat\g_0$ and $f\in\hat\g_{-1}$.
Using $\hat g$ and $t$, we construct a reductive $\R$-subgroup
$\PP_t\subseteq\GG_e$ (not necessarily connected)
such that
\[\GG_e=R_u(\GG_e)\rtimes\PP_t\hs,\]
$R_u$ denoting the unipotent radical. Then by Sansuc's lemma we have
\[\Hr^1\GG_e=\Hr^1\PP_t\hs.\]

For computing $\Hr^1\PP_t$\hs,
we have a computer program \cite{dG-GalCohom}, described in \cite{BG},
which computes the Galois cohomology of a real algebraic group $\HH$,
and the input for which is a real basis of the Lie algebra $\Lie\HH$
and a set of representatives in $H=\HH(\C)$ of the component group $\pi_0(H)$.
Thus we need $\Lie\PP_t$ and $\pi_0(P_t)$.
It is easy to compute  $\Lie\PP_t$ using computer,
but computing $\pi_0(P_t)$ is tricky.
We computed $\pi_0(P_t)$ case by case via a computer-assisted calculation
with participation of a human mathematician.
For details see Sections \ref{s:Galois}-\ref{s:Details}.

In Appendix \ref{app:Duality}
we consider an alternative approach for the case $(m,n)=(4,4)$.
Namely, by duality (see Gauger \cite[Section 3]{Gauger}
or Galitski and Timashev \cite[Section 1.2]{GT})
our classification problem for  $(4,4)$ reduces to the
already solved classification problems for $(4,2)$ and $(3,2)$.
Our results for  $(4,4)$ are consistent with the results
of \cite{dG-6} for $(4,2)$ and $(3,2)$.

\subsection*{Notation}
In this paper, by an algebraic group we mean a {\em linear} algebraic group.
By letters $\GG,\HH,\dots$ in the boldface  font we denote {\em real} algebraic groups.
By the same letters, but in the usual (non-bold) font $G,H,\dots$,
we denote the corresponding {\em complex} algebraic groups
$G=\GG\times_\R\C$, $H=\HH\times_\R\C$,\dots (though the standard notations are $\GG_\C$ and $\HH_\C$)
and by the corresponding small Gothic letters $\g,\h,\dots$,
we denote the Lie algebras of $G,H,\dots$.
Similarly, for real vector spaces $\UU,\VV$,
we write $U= \UU\otimes_\R\C$, $V=\VV\otimes_\R\C$, \dots.

For a real algebraic group $\GG$, we denote by $\GG(\R)$ and $\GG(\C)$
the groups of the real points and the complex points of $\GG$, respectively;
see Section \ref{s:real-alg} for details.
By abuse of notation, we identify $G\coloneqq \GG\times_\R\C$
 with the group of $\C$-points $\GG(\C)=G(\C)$.
In particular, $g\in G$ means $g\in G(\C)$.

We gather some of our notations:
\begin{itemize}
\item $Z(G)$ denotes the center of an algebraic group $G$;
\item $\Aut(G)$ denotes the automorphism group of $G$;
\item $\Inn(G)$ denotes the group of inner automorphisms of $G$;
\item $\Out(G)=\Aut(G)/\Inn(G)$;
\item $\Lie(G)$ denotes  the Lie algebra of $G$;
\item $\Dyn(G)$ denotes the Dynkin diagram of a connected reductive group $G$;
\item $G^0$  denotes the identity component of an algebraic group $G$;
\item $\pi_0(G)=G/G^0$ denotes the component group of $G$;
\item $\Hr^1\GG=\Hr^1(\R,\GG)$, the first Galois cohomology of a real algebraic group $\GG$;
\item $\GL(n,\C)$ denotes the complex Lie group  of invertible complex $n\times n$-matrices,
and also the complex algebraic group with this group of $\C$-points;
\item $\GL(n,\R)$ is the real {\em Lie group} of invertible real $n\times n$-matrices;
\item $\GL_{n,\R}$ is the connected real {\em algebraic group} with the group of real points $\GL(n,\R)$.
\end{itemize}

\section{First reductions}
\label{s:reductions}

Let  $L$ be  a two-step nilpotent Lie algebra over a field $\kk$ of characteristic different from 2.
If $A$ is a nonzero abelian Lie algebra, consider the direct product $L\times A$,
which has $L\oplus A$ as the underlying vector space and has the commutator
\[ \big[ (x,a),(x',a')\big]=( [x,x'], 0)\quad\ \text{for}\ \,x,x'\in L,\ \, a,a'\in A.\]
This is again a two-step nilpotent Lie algebra;
we say that such a Lie algebra $L\times A$ is {\em degenerate}.
Clearly, in order to classify  two-step nilpotent Lie algebras of dimension $d$ over a field $\kk$,
it suffices to classify {\em non-degenerate}  two-step nilpotent Lie algebras over $\kk$ of dimension $\le d$.

\begin{lemma}
Let $L$ be a finite-dimensional   two-step nilpotent Lie algebra over a field $\kk$.
Then $L$ is non-degenerate if and only if $L^{(1)}= \z(L)$.
\end{lemma}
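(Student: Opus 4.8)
The plan is to prove both implications by contraposition, using throughout the fact that a two-step nilpotent Lie algebra always satisfies $L^{(1)}\subseteq\z(L)$, so that the equality $L^{(1)}=\z(L)$ fails precisely when this inclusion is strict.

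For the easy direction, I would suppose $L$ is degenerate, say $L\cong L_0\times A$ with $A$ a nonzero abelian Lie algebra, and identify $A$ with the corresponding summand of $L$. Then every element of $A$ commutes with all of $L_0$ and all of $A$, so $A\subseteq\z(L)$; on the other hand $L^{(1)}=[L,L]=[L_0,L_0]\subseteq L_0$, whence $A\cap L^{(1)}=0$. As $A\neq 0$, this forces $L^{(1)}\subsetneq\z(L)$, i.e.\ $L^{(1)}\neq\z(L)$. This establishes that $L^{(1)}=\z(L)$ implies non-degeneracy.

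For the converse I would again argue contrapositively: assuming $L^{(1)}\subsetneq\z(L)$, I would exhibit a direct-product decomposition. First choose a vector-space complement $A$ of $L^{(1)}$ inside $\z(L)$, so that $\z(L)=L^{(1)}\oplus A$ with $A\neq 0$ by strictness. Next choose a vector-space complement $C$ of $\z(L)$ in $L$ and set $L_0=L^{(1)}\oplus C$, giving $L=L_0\oplus A$ as vector spaces. The point of putting $L^{(1)}$ into $L_0$ is that then $[L_0,L_0]\subseteq[L,L]=L^{(1)}\subseteq L_0$, so $L_0$ is a subalgebra; meanwhile $A\subseteq\z(L)$ is an abelian ideal with $[L,A]=0$. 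Consequently the bracket of $x+a,\ x'+a'\in L_0\oplus A$ equals $[x,x']\in L_0$, which is exactly the commutator of the direct product $L_0\times A$, so $x+a\mapsto(x,a)$ is a Lie algebra isomorphism $L\isoto L_0\times A$. Since $A\neq 0$ is abelian, $L$ is degenerate.

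The only genuinely substantive point is this converse, and within it the choice of the complement $L_0$: one must arrange $L^{(1)}\subseteq L_0$ so that $L_0$ closes under the bracket, while the centrality of the complementary summand $A$ makes all cross-brackets vanish and splits the bracket into a product. Everything else is linear algebra (existence of complements over an arbitrary field) together with the defining inclusion $L^{(1)}\subseteq\z(L)$; note in particular that the hypothesis $\mathrm{char}\,\kk\neq2$ plays no role in this lemma.
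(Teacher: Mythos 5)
Your proposal is correct and follows essentially the same route as the paper: for the converse you pick a complement $A$ of $L^{(1)}$ in $\z(L)$ and a complement of $\z(L)$ in $L$, and assemble the subalgebra $L_0=L^{(1)}\oplus C$ exactly as the paper does with its $L'=U'\dotplus L^{(1)}$. The only (immaterial) difference is in the easy direction, where you deduce $L^{(1)}\neq\z(L)$ from $A\subseteq\z(L)$, $A\cap L^{(1)}=0$, $A\neq 0$ rather than computing $\z(L'\times A)=\z(L')\oplus A$ outright.
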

\begin{proof}
If $L$ is degenerate, that is,  $L=L'\times A$  where $A$ is a nontrivial abelian Lie algebra, then $\z(L)=\z(L')\oplus A$ and
\[L^{(1)}=(L')^{(1)}\subseteq \z(L')\,\subsetneqq\,\z(L')\oplus A\hs= \z(L),\]
whence $L^{(1)}\neq \z(L)$.

Conversely, assume that $L^{(1)}\neq \z(L)$. Choose a complement $A$ to  $L^{(1)}$ in $\z(L)$, so that
\[\z(L)=L^{(1)}\dotplus A\quad\ \text{with}\ \, A\neq 0,\]
and choose a complement $U'$ to $\z(L)$ in $L$, so that
\[L=U'\dotplus \z(L)=U'\dotplus L^{(1)}\dotplus A.\]
Here $\dotplus$ denotes internal direct sum.
Set
\[L'=U'\dotplus L^{(1)};\]
then $L=L'\dotplus A$.
If $x'\in L'\subset L$ and $y\in L$, then
\[ [x',y]\in L^{(1)}\subset L',\]
whence  $L'$ is an ideal of $L$. Clearly, $A$ is a central ideal of $L$ and
\[L=L'\times  A.\]
Thus $L$ is degenerate, as required.
\end{proof}

In this paper we classify {\em non-degenerate}
two-step nilpotent Lie algebras of dimension 8 over $\R$.
Clearly, classification of degenerate
two-step nilpotent Lie algebras of dimension 8 over $\R$
can be reduced to classification of non-degenerate
two-step nilpotent Lie algebras of smaller dimension over $\R$
(which is known).

Let  $L$ be a non-degenerate two step nilpotent Lie algebra over a field $\kk$ of characteristic different from 2.
Set $U=L/\z(L)$ and  $V=L^{(1)}\subseteq \z(L)$.
The Lie bracket in $L$ defines an skew-symmetric bilinear map
\begin{equation*}
\beta\colon U\times U\to V
\end{equation*}
and the induced linear map
\[\beta_*\colon\bigwedge^2 U\to V.\]

The triple $(U,V,\beta)$ is non-degenerate in the following sense:
the linear map $\beta_*$ is surjective, and for any nonzero $u\in U$,
there exists $u'\in U$ with $\beta(u,u')\neq 0$.

Let $L$ be a non-degenerate  two-step nilpotent Lie algebra.
Write $m=\dim U$, $n=\dim V$ where $U$,$V$ are as above.
Then $m+n=\dim L$ (because $L$ is non-degenerate).
We say then that $L$ is {\em of signature $(m,n)$}.

A non-degenerate two-step nilpotent Lie algebra $L$ of signature $(m,n)$
defines a non-degenerate triple $(U,V,\beta)$  of signature $(m,n)$
(that is, with $\dim U=m$, $\dim V=n$).
Conversely, a non-degenerate triple $(U,V,\beta)$ of signature $(m,n)$
defines a non-degenerate 2-step-nilpotent Lie algebra of signature $(m,n)$ with underlying vector space
\[L=U\oplus V\]
and with the Lie bracket
\[ [(u,v),(u',v')]=\big(0, \beta(u,u')\big)\quad\ \text{for}\ \,u,u'\in U,\ v,v'\in V.\]

We see that to classify non-degenerate
two-step nilpotent Lie algebras $L$ of signature $(m,n)$ up to an isomorphism
is the same as to classify non-degenerate triples $(U,V,\beta)$
with $\dim U=m$ and $\dim V=n$ up to isomorphism,
which in turn is equivalent to classification of the orbits of the Lie group
$\GL(m,\kk)\times \GL(n,\kk)$ in the set of non-degenerate skew-symmetric maps
\[ \beta\colon \kk^m\times\kk^m\to\kk^n.\]

We wish to classify non-degenerate skew-symmetric maps $\beta$ as above
over $\kk=\R$ for the signatures $(m,n)$
with $m+n=8$, that is,
\[ (1,7),\ \,(2,6),\ \,(3,5),\ \,(4,4),\ \,(5,3),\  \,(6,2),\  \,(7,1).\]
However, if $(m,n)=(7,1)$, then $\beta$ is a skew-symmetric bilinear form on $\R^7$,
but we know that there is no non-degenerate skew-symmetric bilinear forms on odd-dimensional spaces;
see, for instance,   Artin \cite[Theorem 3.7]{Artin} or Lang \cite[Theorem XV.8.1]{Lang}.
Moreover, if $m\le 3$, then $\dim\bigwedge^2 U=m(m-1)/2\le 3$, while $\dim V=n\ge 5$,
and therefore the linear map $\beta_*\colon \bigwedge^2 U\to V$ cannot be surjective.
Thus for $(m,n)=(1,7)$, $(2,6)$, $(3,5)$ there are no
non-degenerate skew-symmetric bilinear maps of those signatures.
It remains to classify the non-degenerate skew-symmetric maps $\beta$ for
\[ (m,n)\,=\,(4,4),\ \,(5,3),\ \, (6,2).\]

\newcommand{\gr}{\rowcolor[gray]{.89}}
\newcommand{\f}[1]{{$\scriptstyle{#1}$}}
\newcommand{\y}[1]{{\tiny{#1}}}
\newcommand{\ttt}{{\mathfrak t}}
\newcommand{\uuu}{{\mathfrak u}}
\newcommand{\sssp}{\mathfrak{\mathop{sp}}}

\newcommand{\cc}{{\uparrow}}
\newcommand{\ee}[3]{{e_{#1#2\cc#3}}}

\section{Tables}
\label{s:Tables}

In Tables 1--3 below,
we classify  the orbits of the group $\GG(\R)=\GL(m,\R)\times\GL(n,\R)$
acting on the set of {\em non-degenerate} skew-symmetric bilinear maps $ \R^m\times \R^m\to \R^n$ for $m+n=8$.
This corresponds to the isomorphism classes of non-degenerate two-step nilpotent real Lie algebras of dimension $8$.

In these tables, our real two-step nilpotent  Lie algebra $L$ is $\R^8$ with the standard basis $e_1,\dots,e_8$\hs.
The notations like  1 and 1-bis denote two real orbits contained in the same complex orbit.
The representatives $1,\,2,\,3\dots$ in each table were taken from \cite[Tables 2 and 8]{GT}.
Using Galois cohomology, we determined whether there are other orbits in the same complex orbit,
and if yes, we computed a representative of each real orbit.
It turned out that there are at most two real orbits in each complex orbit;
the other real orbit in the complex orbit containing the real orbit 1 is denoted by 1-bis.

In each row, the Lie bracket is given by the $(2,1)$-tensor $e$ given in the table,
as explained in Section \ref{s:reductions}.

For example, in the row 3-bis in Table \ref{tab:reps(5,3)}, our (2,1)-tensor is
\begin{equation}\label{e:3(2)}
 e=  \ee138 +\ee147 +\ee156 -\ee237 +\ee248 +\ee346
\end{equation}
where we write $\ee138$ for the $(2,1)$-tensor $(e_1^*\wedge e_3^*)\otimes e_8$\hs.
Here  $e^*_1$ and $e^*_3$ are basis vectors of the dual space
$\UU^*\coloneqq \Hom(\R^5,\R)$ with basis $e_1^*,\dots,e_5^*$,
and $e_8$ is a basis vector of the space $\VV=\R^3$ with basis $e_6,e_7,e_8$.
This tensor $e$ of formula  \eqref{e:3(2)} defines the following Lie bracket:
\begin{gather*}
 [e_1,e_3]=e_8\hs;\ \, [e_1,e_4]=e_7\hs;\ \, [e_1,e_5]=e_6\hs;\ \, [e_2,e_3]=-e_7\hs;\ \,
 [e_2,e_4]=e_8\hs;\ \,[e_3,e_4]=e_6\hs;\\
 [e_1,e_2]=0;\ \,[e_2,e_5]=0\hs;\ \,[e_3,e_5]=0\hs;\ \,[e_4,e_5]=0.
\end{gather*}

\newpage
\begin{longtable}{|l|l|l|l|l|l|l}
\caption{Nondegenerate real orbits of signature $(6,2)$}\label{tab:reps(6,2)}
\endfirsthead
\hline
\endhead
\hline
\endfoot
\endlastfoot
\hline
\y{No.}    &  {\Small Representative $e$ of an orbit} &\f{\pi_0} &\ \ \ \ \  \f{\g_t''}
        &\qquad{\Small  Rep.~in $U$} &\ \ {\Small Rep.~in $V$} \\
\hline
\gr
\y{1}
  &\f{\ee127 +\ee348 +\ee567 +\ee568} &\f{S_3}  &\f{3\hs\ssl(2,\R)}
  &\f{(1,0,0)+(0,1,0)+(0,0,1)} &\f{(0,0,0)+(0,0,0)}
\\
\gr
\y{1-bis}
  &\f{\ee127 +\ee348 -\ee367 +\ee457 -\ee568}
  &  &\f{\ssl(2,\C)+\ssl(2,\R)}
  &&
\\
\y2
  &\f{\ee148 +\ee157 +\ee238 +\ee467 } &\f1 &\f{2\hs\ssl(2,\R)+\t}
  &\f{(1,0)+(1,0)+(0,1)}    &\f{(0,0)+(0,0)}
\\
\gr
\y{3}
  &\f{ \ee148 +\ee157 +\ee238 +\ee267 +\ee347 } &\f1 &\f{\ssl(2,\R)+\t}
  &\f{(1)+(1)+(1)}    &\f{(0)+(0)}
\\
\y{4}
  &\f{ \ee137 +\ee168 +\ee248 +\ee257 }  &\f1  &\f{2\hs\ssl(2,\R)+\t}
  &\f{(1,1)+(0,1)}       &\f{(1,0)}
\\
\gr
\y5
  &\f{ \ee128 +\ee347 +\ee567 } &\f1    & \f{\sssp(4,\R)+\ssl(2,\R)+\t}
  &\f{(1,0,0)+(0,0,1)}    &\f{(0,0,0)+(0,0,0)}
\\
\y6
  &\f{ \ee128 +\ee167 +\ee257 +\ee347 } &\f1    & \f{2\hs\ssl(2,\R)+\t}
  &\f{(1,0)+(1,0)+(0,1)}    &\f{(0,0)+(0,0)}
\\
\hline
\end{longtable}

\begin{longtable}{|l|l|l|l|l|l|l}
\caption{Nondegenerate real orbits of signature $(5,3)$}\label{tab:reps(5,3)}
\endfirsthead
\hline
\endhead
\hline
\endfoot
\endlastfoot
\hline
\y{No.}    &\qquad{\Small Representative $e$ of an orbit} &\f{\pi_0} &\ \ \ \ \  \f{\g_t''}
        &\quad\ {\Small   Rep.~in $U$} &{\Small Rep.~in $V$}\\
\hline
\gr
\y{1}  &\f{ \ee126 +\ee158 +\ee238 +\ee257 +\ee347 +\ee456 }  &\f1   &\f{ \ssl(2,\R) }
       &\f{(4)}   &\f{(2)}
\\
\gr
\y{1-bis}    &\f{ \ee126 -\ee158 +2\ee236 +2\ee248 +\ee257 -2\ee347 -2\ee358 +2\ee456 }
        &   &\f{ \su(2) }&&
\\
\y{2}   &\f{ \ee146 +\ee157 +\ee238 +\ee247 +\ee356  }  &\f{C_2}   &\f{ 2\t }&&
\\
\y{2-bis}  &\f{\ee146 -\ee157 -\ee238 +\ee246 +\ee257 +\ee347 -\ee356 }
        &\f{}   &\f{ \t+\ug }&&
\\
\gr
\y{3}   &\f{ \ee148 +\ee156 +\ee237 +\ee248 +\ee346  }
        &\f{C_2}    &\f{ 2\t }&&
\\
\gr
\y{3-bis}   &\f{ \ee138 +\ee147 +\ee156 -\ee237 +\ee248 +\ee346 }
        &\f{}    &\f{ \t+\ug} &&
\\
\y4     &\f{ \ee128 +\ee157 +\ee237 +\ee256 +\ee346 }  &\f{1}    &\f{ 2\t } &&
\\
\gr
\y{5}   &\f{ \ee136 +\ee158 +\ee247 +\ee258  }   &\f{ S_3 }    &\f{ 3\t }  &&

\\
\gr
\y{5-bis}   &\f{\ee136 +\ee148 +\ee157 -\ee247 +\ee258}
&\f{ }    &\f{ 2\t+\ug }  &&
\\
\y6     &\f{ \ee138 +\ee157 +\ee237 +\ee246  }  &\f{1}    &\f{ 3\t }  &&
\\
\gr
\y7     &\f{ \ee138 +\ee147 +\ee156 +\ee237 +\ee246  }  &\f{1}    &\f{ 2\t } &&
\\
\y8     &\f{ \ee127 +\ee158 +\ee256 +\ee346  }  &\f{1}    &\f{ 2\hs\ssl(2,\R)+\t }
        &\f{(1,0)+(0,1)+(0,0)}      &\f{(0,1)+(0,0)}
\\
\gr
\y9     &\f{ \ee128 +\ee136 +\ee157 +\ee247 +\ee256  }  &\f{1}    &\f{ \ssl(2,\R)+\t }
        &\f{(2)+(1)}        &\f{(1,0)}
\\
\y{10}  &\f{ \ee128 +\ee146 +\ee237 +\ee356  }  &\f{1}    &\f{  \ssl(2,\R)+2\t }
        &\f{(1)+(1)+(0)}       &\f{(1)+(0)}
\\
\gr
\y{11}  &\f{ \ee147 +\ee158 +\ee236  }  &\f{1}    &\f{ 2\hs\ssl(2,\R)+2\t }
        &\f{(1,0)+(0,1)+(0,0)}      &\f{(1,0)+(0,0)}
\\
\y{12}  &\f{ \ee128 +\ee147 +\ee156 +\ee236 }  &\f{1}    &\f{ 3\t }  &&
\\
\hline
\end{longtable}

\begin{longtable}{|l|l|l|l|l|l|l}
\caption{Nondegenerate real orbits of signature $(4,4)$}\label{tab:reps(4,4)}
\endfirsthead
\hline
\endhead
\hline
\endfoot
\endlastfoot
\hline
\y{No.}    &\quad {\Small Representative $e$ of an orbit} &\f{\pi_0} &\ \ \ \ \  \f{\g_t''}
         &{\Small\   Rep.~in $U$} &\ {\Small Rep.~in $V$} \\
\hline
\gr
\y{1}   &\f{ \ee125 +\ee137 +\ee248 +\ee346 }    &\f{C_2} &\f{2\hs\ssl(2,\R)+\t}
        &\f{(1,0)+(0,1)}    &\f{(1,1)}
\\
\gr
\y{1-bis}  &\f{ \ee125 +\ee137 -\ee148 -\ee238 -\ee247 +\ee346 }
        &\f{} &\f{\ssl(2,\C)+\ug}
        &&
\\
\y{2}   &\f{ \ee128 +\ee135 +\ee147 +\ee237 +\ee246 }    &\f{1} &\f{ \ssl(2,\R)+\t }
        &\f{(1)+(1)}     &\f{(2)+(0)}
\\
\gr
\y{3}   &\f{ \ee127 +\ee138 +\ee145 +\ee236 }    &\f{1} &\f{ \ssl(2,\R)+2\t}
        &\f{(1)+(0)+(0)}       &\f{(1)+(0)+(0)}
\\
\hline
\end{longtable}

In the columns 3--6 ($\pi_0$, $\g_t''$, ``Rep.~in $U$'', ``Rep.~in $V$'')  of each of the tables,
we give certain invariants of the stabilizer $\GG_e$ of our tensor $e\in\Hom(\bigwedge^2 \UU,\VV)$
in the group $\GG=\GL(\UU)\times\GL(\VV)$.
We use these invariants  in order to compute the Galois cohomology of $\GG_e$,
which permits us to determine the real orbits in the complex orbit $G\cdot e$.

We define the invariant $\pi_0$ here: it is the component  group $\pi_0(G_e)$
of the stabilizer $G_e$ of our tensor $e$.
The real Lie algebra $\g_t''$ is defined in Section \ref{s:Galois},
and the representations in the columns ``Rep.~in $U$'' and  ``Rep.~in $V$''
are defined in  Section \ref{s:Details}.
We remark that the most tricky part of our calculations is the calculation of
$\pi_0(G_e)$; see Section \ref{s:Details} for an outline of the
methods that we have used.

We see from the table  that there are 27 isomorphism classes
of non-degenerate two-step nilpotent Lie algebras of dimension 8 over $\R$:
seven isomorphism classes of signature (6,2), sixteen isomorphism classes of signature (5,3),
and four isomorphism classes of signature (4,4).
Any isomorphism class over $\C$ comes from one or two isomorphism classes over $\R$.

\section{Real algebraic groups and real Galois cohomology}
\label{s:real-alg}

Let $\GG$ be a real linear algebraic group.
In the coordinate language, one may regard $\GG$ as
a subgroup in the general linear group $\GL(N,\C)$ (for some natural number $N$)
defined by polynomial equations with {\em real} coefficients in the matrix entries;
see Borel \cite[Section 1.1]{Borel-66}.
More conceptually, one may assume that $\GG$ is an affine group scheme
of finite type over $\R$; see  Milne \cite[Definition 1.1]{Milne-AG}.
With any of these two equivalent  definitions, $\GG$ defines a covariant functor
\begin{equation*}
A\mapsto \GG(A)
\end{equation*}
from the category of commutative unital $\R$-algebras to the category of groups.
We apply this functor to the $\R$-algebra $\R$ and obtain a real Lie group $\GG(\R)$.
We apply this functor  to the $\R$-algebra $\C$ and to the morphism of $\R$-algebras
\[\gamma\colon \C\to\C, \quad z\mapsto \bar z\quad\text{for }z\in \C,\]
and obtain a complex Lie group $\GG(\C)$ together with an anti-holomorphic involution
$\GG(\C)\to \GG(\C),$
which we denote by $\sigma_\GG$.
The Galois group $\Gamma$ naturally acts on $\GG(\C)$; namely,
the complex conjugation $\gamma$ acts by $\sigma_\GG$.
We have $\GG(\R)=\GG(\C)^\Gamma$ (the subgroup of fixed points).

We shall consider the linear algebraic group $G\coloneqq \GG\times_\R\C$
obtained from $\GG$ by extension of scalars from $\R$ to $\C$.
Since $G$ is an affine group scheme over $\C$, we have the ring of regular function
$\C[G]=\R[\GG]\otimes_\R\C.$
Our anti-holomorphic involution $\sigma_\GG$ of $\GG(\C)$
is {\em anti-regular} in the following sense:
when acting on the ring of holomorphic functions on $G$
(by acting by $\sigma_\GG^{-1}$ on the argument of a function,
and by complex conjugation on the value)
it preserves the subring $\C[G]$ of regular functions.
An anti-regular involution of $G$ is called also  a {\em real structure on} $G$.

\begin{remark}
If $G$ is a {\em reductive} algebraic group over $\C$ (not necessarily connected),
then any anti-holomorphic involution of $G$
is anti-regular. The hypothesis that $G$ is reductive is necessary.
For details and references see \cite[Section 1]{BT21}.
\end{remark}

A morphism of real linear algebraic groups $\GG\to\GG'$ induces a morphism of pairs
$(G,\sigma_\GG)\to (G',\sigma_{\GG'})$.
In this way we obtain a functor
$\GG\mapsto(G,\sigma_\GG)$.
By Galois descent this functor is an equivalence of categories;
for details and references see \cite[Section 1]{BT21} or \cite[Appendix A]{BG}.
In particular, any pair $(G,\sigma)$, where $G$ is
a complex linear algebraic group and $\sigma$ is a real structure on $G$,
is isomorphic to a pair coming from a real linear algebraic group $\GG$,
and any morphism of pairs $(G,\sigma)\to (G',\sigma')$ comes
from  a unique  morphism of the corresponding real algebraic groups.

When computing the Galois cohomology $\Hr^1\GG$ for a {\em real} algebraic group $\GG$,
we shall actually work with the pair $(G,\sigma)$,
where $G$ is a {\em complex} algebraic group and $\sigma$ is a real structure on $G$.
We shall shorten ``real linear algebraic group'' to ``$\R$-group''.

Let $\GG=(G,\sigma)$ be a real algebraic group (not necessarily connected or reductive).
The Galois group $\Gamma=\{1,\gamma\}$ acts on $G$ by
$$\upgam g=\sigma(g)\quad\ \text{for}\ \,g\in G.$$
We define the {\em first Galois cohomology set} $\Hr^1(\R,\GG)$ by
\[ \Hr^1(\R,\GG)=\Zr^1\GG/\sim.\]
Here $\Zr^1\GG=\{z\in G\ |\ g\cdot\upgam g=1\}$ is the {\em set of $1$-cocycles},
and two cocycles $z,z'\in \Zr^1\GG$ are {\em equivalent} (we write $z\sim z'$) if
$z=g^{-1}\cdot z'\cdot\upgam g$ for some  $g\in G$.
We shorten $\Hr^1(\R,\GG)$ to $\Hr^1\GG$.

For details see \cite[Section 3.3]{BGLa} or \cite[Section 4]{BG}.
See Serre's book \cite{Serre} for the Galois cohomology $H^1(\kk,\GG)$
for an algebraic group $\GG$ over an arbitrary field $\kk$.

\section{Action on the based root datum}
\label{s:BRD}

Let $G$ be a connected reductive group
over an algebraically closed field $\kk$.
Let $T\subset G$ be a maximal torus, and let $B\subset G$
be a Borel subgroup containing $T$.
We consider the {\em based root datum}
\[\BRD(G,T,B)=(X,X^\vee, \cR,\cR^\vee, \cS, \cS^\vee).\]
Here
\begin{itemize}
\item $X=X^*(T)$ is the character group of $T$;
\item $X^\vee=X_*(T)$ is the cocharacter group of $T$;
\item $\cR=\cR(G,T)\subset X$ is the root system;
\item $\cR^\vee=\cR^\vee(G,T)\subset X^\vee$ is the coroot system;
\item $\cS=\cS(G,T,B)\subset \cR$ is the system of simple roots;
\item $\cS^\vee=\cS^\vee(G,T,B)\subset \cR^\vee$ is the system of simple coroots.
\end{itemize}
For details see Springer \cite[Sections 1 and 2]{Springer}.

Recall that the root system $\cR$ is defined in term of the root decomposition
\[ \Lie G = \Lie T \oplus\bigoplus_{\alpha\in \cR} \g_\alpha\]
where $\g_\alpha$ is the eigenspace corresponding to the root $\alpha$.
For each $\alpha\in \cS$ we choose a nonzero element $x_\alpha\in \g_\alpha$\hs.
We write $\cP=\{x_\alpha\ |\ \alpha \in\cS\}$
and say that $\cP$ is a {\em pinning} of $(G,T,B)$.

We write $\cS=\{\alpha_1,\dots\alpha_r\}$ and consider the Cartan matrix
with entries $a_{ij}=\langle \alpha_i,\alpha_j^\vee\rangle$.
Recall that the {\em Dynkin diagram} $\Dyn(G)$ is the graph
whose set of vertices is the set of simple roots $\cS$
and whose set of edges is defined in the usual way
using the Cartan matrix; see, for instance, \cite[Section 3.1.7]{GOV}.

We say that $(T,B,\cP)$ is a Borel triple in $G$.
It is well known that if $(T',B',\cP')$ is another Borel triple,
then there exists a unique element $g^\ad=gZ(G)\in \Inn(G)\coloneqq G/Z(G)$
such that $gTg^{-1}=T'$, $gBg^{-1}=B'$, $g\cP g^{-1}=\cP$.
This element $g^\ad$ induces an isomorphism
$g^{\ad*}\colon\BRD(G,T',B')\isoto \BRD(G,T,B)$.
Moreover, this induced isomorphism $g^{\ad*}$ does not depend
on the choice of the pinning $\cP$ as above.
Thus for given $G$ we can canonically identify
the based root data $\BRD(G,T,B)$
for all Borel pairs $(T,B)$.
We obtain the canonical based root datum $\BRD(G)$.

The automorphism group $\Aut(G)$ naturally acts on  $\BRD(G)$,
and so we obtain a canonical homomorphism
\[\phi\colon \Aut(G)\to\Aut\, \BRD(G).\]
We describe $\phi$. Choose a pinning $\cP=(x_\alpha)$ of $(G,B,T)$.
Write $\BRD(G)=\BRD(G,T,B)$.
Consider the Borel triple $(T,B,\cP)$.
Let $a\in\Aut(G)$.
Then $\big(a(T),a(B),a(\cP)\big)$ is again a Borel triple in $G$,
and therefore there exists $g_a\in G$
such that
\[g_a\cdot a(T)\cdot g_a^{-1}=T,\quad\ g_a\cdot a(B)\cdot g_a^{-1}=B,\quad\ g_a\cP g_a^{-1}=\cP.\]
We see that the automorphism $\inn(g_a)\circ a$ of $G$
preserves the Borel triple $(T,B,\cP)$ and thus induces an automorphism
$\phi(a)$ of $\BRD(G,T,B)$.
One checks that the obtained automorphism $\phi(a)$ does not depend on the choice of $\cP$ and  $g_a$ as above.
For details see \cite[Section 3]{BKLR}.
By construction, the subgroup $\Inn(G)\subseteq\Aut(G)$ acts on $\BRD(G)$ trivially,
and so we obtain an action of $\Out(G)\coloneqq\Aut(G)/\Inn(G)$ on $\BRD(G)$.
The action of $\Out(G)$ on $\BRD(G)$, in particular, on $\cS$ and $\cS^\vee$, induces an action on $\Dyn (G)$.

We embed $X^\vee$ into $\tg\coloneqq \Lie T$ as follows.
Let $\nu\in X^\vee$, $\nu\colon \kk^\times\to T$.
Consider $d\nu\colon \kk\to \tg$ and set $h_\nu=(d\nu)(1)\in\tg$.

Consider the center $Z(G)$, its identity component $Z(G)^0$
(which is a torus), and the cocharacter group
$X_Z^\vee= X_*(Z(G)^0)$.
We can identify
\[X^\vee_Z=\{\nu\in X^\vee\ |\ \langle\alpha,\nu\rangle=0\ \,\text{for all}\ \alpha\in \cS\}.\]
The group $\Out(G)$ naturally acts on the torus $Z(G)^0$ and on its cocharacter group $X^\vee_Z$.
Moreover, it acts on the Lie algebra $\z\coloneqq\Lie Z(G)$ and on the lattice
$\{h_\nu\in\z\ |\ \nu\in X_Z^\vee\}.$

For  $\alpha\in \cS\subset X$, we consider
$\alpha^\vee\in \cS^\vee\subset X^\vee$,
and by abuse of notation we write $h_\alpha$ for $h_{\alpha^\vee}\in\tg$.
The set $\{h_\alpha\,|\,\alpha\in \cS\}$ is a basis
of the  Lie algebra $\tg\cap [\g,\g]$
where $[\g,\g]$ is the derived subalgebra of $\g$.

For each $\alpha\in\cS$ we choose a nonzero vector $x_\alpha\in\g_\alpha$.
Then we have
$\Ad(t) x_\alpha=\alpha(t)\cdot x_\alpha$ for $t\in T$,
whence
\[[h_\beta,x_\alpha]=(d\alpha)(h_{\beta^\vee})\cdot x_\alpha
=(d\alpha)(d\beta^\vee(1))\cdot  x_\alpha=d(\alpha\circ\beta^\vee)(1)\cdot x_\alpha
           =\langle\alpha,\beta^\vee\rangle\cdot x_\alpha\]
because $(\alpha\circ\beta^\vee)(t)=t^{\langle\alpha,\beta^\vee\rangle}$ for $t\in\C^\times$.

We choose $y_\alpha\in\g_{-\alpha}$ such that  $[x_\alpha\hs, y_\alpha ]=h_\alpha$\hs.
Then $[h_\alpha,y_\alpha]=-2y_\alpha$.
Note that the set $\{x_\alpha,y_\alpha\,|\, \alpha\in \cS\}$
generates the subalgebra  $[\g,\g]$ of $\g$.

The set $h_\alpha,x_\alpha,y_\alpha$ for $\alpha\in\cS$
has the following properties (see \cite[Section 2.9.3, formulas (2.1)\hs]{dG}):
\begin{align*}
\begin{aligned}
&[h_\alpha\hs,h_\beta]=0;\\
&[x_\alpha\hs,y_\beta]=\delta_{\alpha\beta} h_\alpha;\\
&[h_\beta\hs, x_\alpha]=\langle \alpha,\beta^\vee\rangle x_\alpha;\\
&[h_\beta\hs, y_\alpha]=-\langle \alpha,\beta^\vee\rangle y_\alpha\quad\ \text{for}\ \,\alpha,\beta\in \cS.
\end{aligned}
\end{align*}
We say that  $h_\alpha,x_\alpha,y_\alpha$ is a canonical generating set for $[\g,\g]$

Now let $G$ be a reductive group, {\em not necessarily connected,}
over an algebraically closed field $\kk$.
We consider the action of $G$ on the Dynkin diagram $\Dyn G^0\coloneqq \Dyn \Gza$,
where $\Gza=(G^0)^\ad\coloneqq G^0/Z(G^0)$ denotes the adjoint  group corresponding to
the identity component $G^0$ of the reductive group $G$.

As above, we choose a maximal torus $T\subset \Gza$
and a Borel subgroup $B\subset \Gza$ containing $T$.
We consider the based root datum
\[\BRD\big( \Gza,T,B\big)=(X,X^\vee, \cR,\cR^\vee, \cS, \cS^\vee).\]
For each simple root $\alpha\in \cS$  we choose canonical generators
$x_\alpha,y_\alpha, h_\alpha \in \Lie \Gza=[\g,\g]$ as explained in Section \ref{s:theta}, where $\g=\Lie G$.
Observe that the set $\{x_\alpha,y_\alpha\,|\, \alpha\in \cS\}$
generates the semisimple Lie algebra $[\g,\g]$.

Consider the action of $G$ by conjugation on $G^0$ and on $\Gza$.
We obtain a homomorphism
\[ G\to \Aut \Gza.\]

\begin{lemma}
Consider the homomorphism
\[\phi_{\Dyn}\colon G\to \Aut(\Dyn G^0)=\Aut(\Dyn \Gza)\]
and the group
\[A_1=\{g\in G\ |\ \Ad(g) x_\alpha=x_\alpha,\,
   \Ad(g) y_\alpha=y_\alpha\ \forall \alpha\in \cS.\} \]
Then
$ \ker \phi_{\Dyn}=G^0\cdot A_1$.
\end{lemma}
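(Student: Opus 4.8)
The plan is to factor $\phi_{\Dyn}$ through the conjugation action of $G$ on $\Gza$ and to reduce the assertion to the structure theory of the adjoint group. Writing $c_g\in\Aut(\Gza)$ for the automorphism induced by conjugation by $g\in G$, the map $\phi_{\Dyn}$ is by definition the composite of $g\mapsto c_g$ with $\Aut(\Gza)\to\Aut\BRD(\Gza)\to\Aut(\Dyn\Gza)$. Since $\Gza$ is semisimple and adjoint, its character lattice is the root lattice, which is spanned by $\cS$; hence any automorphism of $\BRD(\Gza)$ is just a permutation of $\cS$ preserving the Cartan matrix, so that $\Aut\BRD(\Gza)=\Aut(\Dyn\Gza)$. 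Combining this with the fact recalled in Section~\ref{s:BRD} (the content of the isomorphism theorem $\Out(\Gza)\isoto\Aut\BRD(\Gza)$) that $\Inn(\Gza)$ is exactly the kernel of $\Aut(\Gza)\to\Aut\BRD(\Gza)$, I would first record the reformulation
\[\ker\phi_{\Dyn}=\{g\in G\ |\ c_g\in\Inn(\Gza)\}.\]

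Next I would identify $A_1$ with the kernel of the whole conjugation action,
\[A_1=\{g\in G\ |\ c_g=\id_{\Gza}\}.\]
The inclusion $\{c_g=\id_{\Gza}\}\subseteq A_1$ is immediate, since $c_g=\id_{\Gza}$ forces $\Ad(g)=\id$ on $\Lie\Gza=[\g,\g]$ and in particular fixes every $x_\alpha$ and $y_\alpha$. For the reverse inclusion I would invoke the fact, noted just before the statement, that $\{x_\alpha,y_\alpha\ |\ \alpha\in\cS\}$ generates the semisimple Lie algebra $[\g,\g]=\Lie\Gza$: thus $g\in A_1$ makes $\Ad(g)$ the identity on all of $\Lie\Gza$, and since $\Gza$ is connected and we are in characteristic zero, an automorphism that is trivial on the Lie algebra is trivial, whence $c_g=\id_{\Gza}$. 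This passage from the Lie algebra back up to the group is the one genuinely nontrivial point, and it is where the characteristic-zero hypothesis (satisfied here, as $\kk=\C$ in all applications) is used.

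Finally I would assemble the two inclusions using the surjection $G^0\twoheadrightarrow\Gza=G^0/Z(G^0)$. For $G^0\cdot A_1\subseteq\ker\phi_{\Dyn}$: conjugation by any $g_0\in G^0$ factors through this surjection and hence gives an inner automorphism of $\Gza$, so $c_{g_0}\in\Inn(\Gza)$, while $c_a=\id_{\Gza}\in\Inn(\Gza)$ for $a\in A_1$; thus both $G^0$ and $A_1$ lie in the right-hand side of the reformulation. For the reverse inclusion, take $g$ with $c_g\in\Inn(\Gza)$, write $c_g=\inn(\bar t)$ with $\bar t\in\Gza$, and lift $\bar t$ to some $t\in G^0$ along the surjection. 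Then $c_{t^{-1}g}=c_t^{-1}c_g=\inn(\bar t)^{-1}\inn(\bar t)=\id_{\Gza}$, so $t^{-1}g\in A_1$ by the previous paragraph, and therefore $g=t\cdot(t^{-1}g)\in G^0\cdot A_1$. Together these give $\ker\phi_{\Dyn}=G^0\cdot A_1$, as claimed.
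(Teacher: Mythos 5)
Your argument is correct and follows essentially the same route as the paper's proof: factor $\phi_{\Dyn}$ through the conjugation map $G\to\Aut(\Gza)$, identify $A_1$ with the kernel of that map using the fact that the $x_\alpha,y_\alpha$ generate $[\g,\g]=\Lie\Gza$, invoke $\Out(\Gza)\isoto\Aut\BRD(\Gza)$ to see that $\ker\phi_{\Dyn}$ is the preimage of $\Inn(\Gza)$, and finish by lifting an inner automorphism to an element of $G^0$. The only cosmetic difference is that the paper works directly with $\Aut(\Gza)=\Aut(\Lie\Gza)$, whereas you pass through the group-level automorphism $c_g$ and then descend to the Lie algebra, making explicit the characteristic-zero step that an automorphism of a connected group trivial on its Lie algebra is trivial.
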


\begin{corollary}
\label{c:A1-Dyn}
We have $A_1\subset \ker\phi_{\Dyn}$, and
the homomorphism $\pi_0(A_1)\to \pi_0(\ker\phi_{\Dyn})$
induced by the inclusion homomorphism $A_1\into \ker\phi_{\Dyn}$
is surjective.
\end{corollary}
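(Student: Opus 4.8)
The plan is to derive the corollary formally from the preceding lemma, whose content—the identification $\ker\phi_{\Dyn}=G^0\cdot A_1$—I take as given. The corollary is then a routine manipulation with identity components and the second isomorphism theorem, carrying essentially no new geometric input.

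First I would dispose of the inclusion $A_1\subseteq\ker\phi_{\Dyn}$. This is immediate from the lemma, since $A_1\subseteq G^0\cdot A_1=\ker\phi_{\Dyn}$ (write $a=1\cdot a$ with $1\in G^0$ the identity). Alternatively one sees it directly: if $g\in A_1$ then $\Ad(g)$ fixes every $x_\alpha$, hence preserves the eigenspace $\g_\alpha$ and thus fixes each simple root $\alpha\in\cS$, so $\phi_{\Dyn}(g)$ is the trivial automorphism of $\Dyn\Gza$.

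Next I would identify the identity component of $\ker\phi_{\Dyn}$. On one hand $G^0$ is connected and its image under $\phi_{\Dyn}$ lies in the finite (discrete) group $\Aut(\Dyn\Gza)$, hence is trivial; so $G^0\subseteq\ker\phi_{\Dyn}$ and therefore $G^0\subseteq(\ker\phi_{\Dyn})^0$. On the other hand $(\ker\phi_{\Dyn})^0$ is a connected subgroup of $G$, so it is contained in the identity component $G^0$ of $G$. Hence $(\ker\phi_{\Dyn})^0=G^0$, and consequently
\[
\pi_0(\ker\phi_{\Dyn})=\ker\phi_{\Dyn}/G^0=(G^0\cdot A_1)/G^0.
\]
Finally I would apply the second isomorphism theorem to the normal subgroup $G^0$ of $\ker\phi_{\Dyn}$, giving an isomorphism $(G^0\cdot A_1)/G^0\isoto A_1/(A_1\cap G^0)$. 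Since $A_1^0$ is a connected subgroup of $G$ it lies in $G^0$, so $A_1^0\subseteq A_1\cap G^0$; thus the natural quotient map $\pi_0(A_1)=A_1/A_1^0\onto A_1/(A_1\cap G^0)$ is surjective. Composing it with the isomorphism above recovers precisely the map $\pi_0(A_1)\to\pi_0(\ker\phi_{\Dyn})$ induced by the inclusion $A_1\into\ker\phi_{\Dyn}$, and surjectivity follows.

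The main obstacle is not in the corollary at all but in the lemma that feeds it: establishing $\ker\phi_{\Dyn}=G^0\cdot A_1$, i.e.\ that every element acting trivially on the diagram can, after multiplication by an element of $G^0$, be arranged to fix the chosen pinning vectors $x_\alpha,y_\alpha$. Granting that, the only point meriting a word of care here is that the map $\pi_0(A_1)\to\pi_0(\ker\phi_{\Dyn})$ need not be injective, because $A_1\cap G^0$ may be strictly larger than $A_1^0$; but surjectivity, which is all that is claimed, is automatic.
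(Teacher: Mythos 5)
Your proof is correct and follows the same route the paper intends: the corollary is stated as an immediate consequence of the lemma $\ker\phi_{\Dyn}=G^0\cdot A_1$, and the paper gives no separate argument. Your derivation—identifying $(\ker\phi_{\Dyn})^0=G^0$ and applying the second isomorphism theorem together with $A_1^0\subseteq A_1\cap G^0$—simply makes explicit the routine verification the authors leave to the reader, including the correct observation that only surjectivity (not injectivity) of $\pi_0(A_1)\to\pi_0(\ker\phi_{\Dyn})$ is claimed.
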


This corollary gives us a method to compute $\pi_0(\ker\phi_{\Dyn})$.

\begin{proof}[Proof of the lemma]
Consider the canonical homomorphism
\[\psi\colon G\to \Aut\Gza= \Aut(\Lie \Gza).\]
Since the elements $x_\alpha,\, y_\alpha$ generate $[\g,\g]=\Lie \Gza$,
we have $A_1=\ker\psi$.
On the other hand,
we can factor the homomorphism $\phi_{\Dyn}$ as follows:
\[ \phi_{\Dyn}\colon G\labelto\psi \Aut \Gza\to
    \Aut \BRD\big( \Gza,T,B\big)=\Aut\!\big(\Dyn \Gza\big).\]
It follows that $A_1=\ker\psi\subseteq \ker\phi_{\Dyn}$\hs.
Writing $\Out\Gza=\Aut\Gza/\Inn\Gza$, we have that the homomorphism
$$\Aut \Gza\to\Aut \BRD\big( \Gza,T,B\big)$$
 induces an isomorphism
$$\Out \Gza\isoto\Aut \BRD\big( \Gza,T,B\big);$$
see Conrad \cite[Proposition 1.5.5]{Conrad}.
It follows that $\ker\phi_{\Dyn}$ is the preimage in $G$
of the subgroup $\Gza=\Inn \Gza\subseteq\Aut\Gza$.
Since $\Gza$ is the identity component of $\Aut\Gza$,
we have $\psi(G^0)=\Gza$, and hence for any $g\in\ker\phi_{\Dyn}$
there exists $g_0\in G^0$ such that $\psi(g_0g)=1\in \Aut \Gza=\Aut\,[\g,\g]$,
that is,  $g_0g\in\ker\psi= A_1$, as required.
\end{proof}

\section{Vinberg's $\theta$-representations}
\label{s:theta}

Vinberg \cite{Vinberg76}, \cite{Vinberg79} introduced a class of
representations of algebraic groups which share many properties with the
adjoint representation of a semisimple Lie algebra. This makes it possible
to classify the orbits corresponding to such a representation. These
representations are constructed from a $\Z/m\Z$-grading or a $\Z$-grading of
a split semisimple Lie algebra over a field $\kk$.  In this paper we deal exclusively with
$\Z$-gradings of split semisimple real Lie algebras.

Let $\g$ be a split semisimple Lie algebra over $\R$.
We construct a class of $\Z$-gradings of $\g$. Fix a split Cartan
subalgebra $\t$ of $\g$ with corresponding root system $\cR=\cR(\g,\t)$. For $\alpha\in
\cR$, we denote the corresponding root space by $\g_\alpha$. Fix a basis
of simple roots $\cS=\{\alpha_1,\ldots,\alpha_\ell\}$ of $\cR$. Let $(d_1,\ldots,
d_\ell)$ be a sequence of non-negative integers. We define a function
$$
d\colon \cR\to \Z,\quad\ d\Big(\sum_{i=1}^\ell m_i \alpha_i \Big) = \sum_{i=1}^\ell m_i d_i\hs.
$$
We let $\g_0$ be the subspace of $\g$ spanned by $\t$ and all subspaces $\g_\alpha$ with
$d(\alpha)=0$. Furthermore, for $i\in \Z$, $i\neq 0$, we let $\g_i$ be
the subspace of $\g$ spanned by all $\g_\alpha$ with $d(\alpha)=i$. Then
$$\g = \bigoplus_{i\in \Z} \g_i$$
is a $\Z$-grading of $\g$.
We observe that $\g_0$ is a reductive Lie subalgebra; see \cite[Theorem 8.3.1]{dG}.
If
$$\alpha=m_1\alpha_1+\dots+m_\ell\alpha_\ell\in\cR
        \quad\ \text{with}\ \, m_i\in\Z_{\ge 0}$$
is a positive root with $m_{i}>0$ for some $i$ such that $d_{i}>0$,
then $d(\alpha)\ge m_{i} d_{i}>0$ and hence $\g_{\alpha_i}\cap\g_0=0$.
It follows that the simple roots  $\alpha_i$ such
that $d_i=0$ form a basis of the root system of $\g_0$ with respect to $\t$.

Let $\GG$ be the inner automorphism group of $\g$. This algebraic $\R$-group is also known as
the adjoint group of $\g$, or as the identity component of the automorphism
group of $\g$. For $x\in \g$ we denote its adjoint map by $\ad x$, so
$(\ad x)(y)= [x,y]$. The Lie algebra of $\GG$ is $\ad \g = \{ \ad x \mid x\in \g\}$.
Consider the Lie subalgebra $\ad \g_0 = \{ \ad x \mid x\in \g_0\}$.
Let $\TT\subset G$ be the Cartan subgroup (maximal torus) of $\GG$ with Lie algebra $\ad\t$,
and let $\GG_0$ denote the algebraic subgroup of $\GG$ generated by $\TT$
and the elements $\exp (\ad x)$ for $x\in \g_\alpha$ with $d(\alpha)=0$;
then $\GG_0$ is the connected real algebraic subgroup of $\GG$
whose Lie algebra is $\ad \g_0$. Since
$[\g_0,\g_i]\subset \g_i$\hs, the action of $\GG_0$ on $\g_1$ leaves the
subspaces $\g_i$ invariant. The representation of $\GG_0$ in $\g_1$ so obtained
is called a $\theta$-representation, and $\GG_0$ together with its action on
$\g_1$ is called a $\theta$-group.

For $x\in \g_1$ and $y\in \g_i$\hs, we have $(\ad x)^k(y) \in \g_{i+k}$.
Since the grading is a $\Z$-grading, we have $\g_{i+k}=0$ for sufficiently big $k$,
implying that $\ad x$ is nilpotent.
It can be shown that each real element $x\in \g_1$ lies in a real
{\em homogeneous $\ssl_2$-triple} $(h,x,y)$, where $h\in \g_0$ and $y\in \g_{-1}$ and
\begin{equation*}
[h,x]=2x,\, [h,y]=-2y,\, [x,y]=h;
\end{equation*}
see \cite[Lemma 8.3.5]{dG}.
The proof of \cite[Lemma 8.3.5]{dG} shows how to construct a homogeneous
$\ssl_2$-triple $(h,x,y)$ for a complex element $x$. When applied to a real
element $x$, this method gives a real $\ssl_2$-triple.

Consider two elements $x,x'\in \g_1$ lying in the homogeneous $\ssl_2$-triples $(h,x,y)$,
$(h',x',y')$. Then $x,x'$ are $G_0$-conjugate if and only if the two
triples are $G_0$-conjugate if and only if $h,h'$ are $G_0$-conjugate; see
\cite[Theorem 8.3.6]{dG}. It is possible to devise an algorithm for
classifying the $G_0$-orbits in $\g_1$ based on this fact; see
\cite[Section 8.4.1]{dG}. An alternative method for this is based on
Vinberg's theory of {\em carrier algebras}; see \cite{Vinberg79}. Since
a $\theta$-group for a $\Z$-grading or a $\Z/m\Z$-grading
has a finite number of {\em nilpotent} orbits
in $\g_{1,\C}$
(see \cite[Corolary 8.3.8]{dG}),
and in the case of a $\Z$-grading all $G_0$-orbits in $\g_1$ are nilpotent,
for  a $\Z$-grading there are finitely many  $G_0$-orbits in $\g_{1,\C}$.

Now we describe the $\Z$-gradings that are relevant to this
paper. They are constructed using a sequence $(d_1,\ldots,d_\ell)$ where
precisely one of the $d_i$ is 1, and all others are 0. We give such a
grading by giving the Dynkin diagram of $\g$ where the node corresponding to
$\alpha_i$ with $d_i=1$ is colored black. We consider the following three
$\Z$-gradings:

\tikzset{/Dynkin diagram,mark=o,
edge length=0.75cm,arrow shape/.style={-{angle 90}}}

\begin{align*}
& (4,4)\quad (\DDD_7)\qquad\quad
\dynkin[%
arrow shape/.style={-{angle 90}},
labels={},
labels*={,,-1,,-1}
]
D{ooo*ooo}
\\
\\
&  (5,3)\quad  (\EEE_7)\qquad\quad
\dynkin[%
upside down,
labels={,,,-1,,-1},
]
E{oooo*oo}
\\
\\
&  (6,2)\quad\hskip-1.2mm   (\EEE_7)\qquad\quad
\dynkin[%
upside down,
labels={-1,,,-1},
]
E{oo*oooo}
\end{align*}

Consider such a $\Z$-grading labeled $(m,n)$
with $(m,n)$ taking values $(4,4),\ (5,3),\ (6,2)$.
Write $\GG_0^\der=[\GG_0\hs,\GG_0]$, the derived subgroup of $\GG_0$.
We see from the Dynkin diagram that the semisimple group $\GG_0^\der$
is of type $\AAA_{m-1}\times \AAA_{n-1}$.
Therefore, the universal cover $\wt\GG_0$ of $\GG_0^\der$
is a split simply connected semisimple group of type $\AAA_{m-1}\times \AAA_{n-1}$
and hence it is isomorphic to $\SL_{m,\R}\times\SL_{n,\R}$; we fix this isomorphism.
We obtain an isogeny (surjective homomorphism with finite kernel)
$$\SL_{m,\R}\times\SL_{n,\R}\,\isoto\, \wt\GG_0\onto \GG_0^\der\hs.$$
Of course, to specify this isogeny we need to work with Lie algebras.

Since $\GG$ is of adjoint type, the set of simple roots
$\{\alpha_1,\dots,\alpha_\ell\}$ (with $\ell =7$)
is a basis of the character group $\XXX^*(\TT)$.
Write $\TT_0=\TT\cap \GG_0$;
then the set $\{\alpha_1,\dots,\alpha_\ell\}\smallsetminus\{\alpha_{i_0}\}$
is a basis of $\XXX^*(\TT_0)$, and $\TT=\TT_0\times \TT^1$,
where $\TT^1=Z(\GG_0)\cong\GmR$ is a 1-dimensional split $\R$-torus
with character group $\XXX^*(\TT^1)=\Z\cdot\alpha_{i_0}$\hs.
Write
\[\GG_{m,n}=\SL_{m,\R}\times\SL_{n,\R}\times \GmR\hs;\]
we obtain an isogeny $\GG_{m,n}\onto \GG_0$\hs.
Since $\g_1$ is the direct sum of eigenspaces $\g_\alpha$ of $\TT$ in $\g$
where $\alpha$ runs over the roots
\[\alpha=m_1\alpha_1+\dots+m_\ell\alpha_\ell\quad\ \text{with}\ \,m_{i_0}=1,\]
we see that $t\in\TT^1(\C)=\C^\times$ acts on $\g_1$ by multiplication by $t$.

We compute the representation of $\TT_0^\der$ in $\g_1$.
For $1\leq i \leq \ell=7$,
let
$$x_i\in \g_{\alpha_i},\quad y_i\in \g_{-\alpha_i}, \quad h_i\in \t$$
be {\em canonical generators} of $\g$.
For the root systems of types $\DDD_7$ and $\EEE_7$, this means that each $(h_i,x_i,y_i)$ is an
$\ssl_2$-triple, that for $i\neq j$ we have
$[h_i,x_j] = -x_j$, $[h_i,y_j] =y_j$ if $i,j$ are connected
in the Dynkin diagram, otherwise $[h_i,x_j]=[h_i,y_j]=0$, and that $[x_i,y_j] =0$.
Let $i_0$ be such that $d_{i_0}=1$. Then the elements $h_i$, $x_i$,
$y_i$ for $i\neq i_0$ form a canonical generating set of the semisimple
part  $\g_0^\der=[\g_0,\g_0]$ of $\g_0$.
In the three cases above it is easy to check
that $\alpha_{i_0}$ is the unique lowest weight of $\g_1$.
This means  that
  (1) for every $i\neq i_0$ we have that $\alpha_{i_0}-\alpha_i$
is not a root, and
  (2) for every root $\alpha\neq\alpha_{i_0}$ with $d(\alpha)=1$,
there is  $i\neq i_0$ such that $\alpha-\alpha_i$ is a root.
Here (1) is obvious, because any root must be
a linear combination of simple roots with non-negative coefficients, or
a linear combination of simple roots with non-positive coefficients.
Assertion (2) can be checked using a list of the positive roots
of the root systems of type $\DDD_7$ and $\EEE_7$\hs, respectively.
(See \cite[Section 8]{Vinberg76} for a more
conceptual approach). The positive roots can be listed using a
computer program like {\sf GAP}; alternatively, they can be found in
Bourbaki \cite{Bourbaki}.
Hence $x_{i_0}\in \g_1$ is the unique (up to scalar) lowest weight vector
(here ``lowest weight vector" means that $[y_i,x_{i_0}]=0$ for $i\neq i_0$).
Furthermore, $[h_i,x_{i_0}] = -x_{i_0}$ if $i$ and $i_0$ are connected in the Dynkin
diagram, and $[h_i,x_{i_0}] = 0$ otherwise. Hence the $\g_0$-module
$\g_1$ is irreducible and we know its lowest weight $\lambda_l$;
the numerical labels $\langle\lambda_l,\alpha_i^\vee\rangle$
are given on the Dynkin diagrams above.
From this we can determine the highest weight of $\g_1$.
Namely, let $\tau$ be the only nontrivial automorphism
of the Dynkin diagram $\AAA_\ell$ when $\ell>1$,
and the trivial automorphism of $\AAA_\ell$ for $\ell=1$.
Then the highest weight $\lambda_h$ is given by $\lambda_h=-\tau(\lambda_l)$;
see \cite[Section 3.2.6, Proposition 2.3 and Theorem 2.13]{GOV}.
We see that the numerical labels of $\lambda_h$ are 1 at the node neighboring an extreme node of $\AAA_m$\hs,
and 1 at an extreme node of $\AAA_n$ for $m\ge n$.
This means that there exists an isomorphism $\wt\GG_0\isoto \SL_{m,\R}\times \SL_{n,\R}$ with $m\ge n$ such that
$\g_1$ is isomorphic to $(\bigwedge^2 \R^m)^* \otimes \R^n$ as a $\GG_{m,n}$-module;
see \cite[Table 5]{OV}.
An element $(a_m,b_n,c_1)\in G_{m,n}$ acts  on $(\bigwedge^2 \C^m)^* \otimes \C^n$ as follows:
$$
(a_m,b_n,c_1)\cdot (\phi\otimes v) = c_1(a_m\cdot \phi)\otimes (b_n\cdot v)
\quad\ \text{for}\ \, \phi\in \big(\bigwedge\nolimits^2 \C^m\big)^*,\ v\in\C^n.
$$
We note that we have a canonical isomorphism
$(\bigwedge^2 \R^m)^* \otimes \R^n \cong \Hom( \bigwedge^2 \R^m,\R^n)$.

\section{Using Galois cohomology}
\label{s:Galois}

Set
\[\YY=\Hom\big(\bigwedge^2 \UU,\hs \VV\big)=\Hom\big(\bigwedge^2 \R^m,\hs \R^n\big), \quad\
Y\coloneqq \YY\otimes_\R\C=\Hom\big(\bigwedge^2 \C^m,\hs \C^n\big)\]
with the standard action of the Galois group $\Gal(\C/\R)$ on $Y$.
Write
\begin{align*}
&\GG=\GL_{m,\R}\times\GL_{n,\R}\hs,\quad\ G=\GL(m,\C)\times \GL(n,\C), \\
&\GG'=\SL_{m,\R}\times\GL_{n,\R}\hs,\quad\ G'=\SL(m,\C)\times \GL(n,\C),\\
&\GG''\coloneqq\GG_{m,n}=\SL_{m,\R}\times\SL_{n,\R}\times\G_{m,\R}\hs,
      \quad\ G''\coloneqq G_{m,n}=\SL(m,\C)\times \SL(n,\C)\times \C^\times.
\end{align*}
See Notation in the Introduction for the notations $\GL_{m,\R}$ and $\GL(m,\C)$.
The group $\GG$ acts on $\YY$ in the standard way, and we have a composite homomorphism
\[ \GG''\labelto p \GG'\into \GG.\]
Here the surjective homomorphism $p$ is given by
\begin{equation}\label{e:p}
p \colon \GG''\to \GG',\ \  (a_m,b_n,c_1)\mapsto (a_m, c_1 b_n)
\end{equation}
where  $a_m\in \SL(m,\C)$, $b_n\in\SL(n,\C)$, $c_1\in\C^\times$,
and we write $c_1 b_n$ for the product of the scalar $c_1$ and the matrix $b_n$\hs. Then
\[\ker p=\big\{(I_m,\hs\lambda^{-1} I_n,\hs\lambda)\ \big|\ \lambda^n=1\big\}\cong\mu_n\hs,\]
where $\mu_n$ denotes the group of $n$-th roots of unity.

Let $e\in \YY=\Hom\big(\bigwedge^2 \R^m, \R^n\big)$, which we view
as embedded into $Y\coloneqq\YY\otimes_\R\C$.
As seen in Section \ref{s:theta}, $\Lie G''$ is
isomorphic to $\hat \g_0$, the zero component of a $\Z$-graded simple complex
Lie algebra $\hat\g$. Moreover, $Y$ is isomorphic as a $G''$-module to
$\hat \g_1$  (in Section \ref{s:theta} it was denoted by $\g_1$).
We use the machinery of $\ssl_2$-triples, as indicated in
Section \ref{s:theta}, to classify the $G''$-orbits in $Y$.
Let $t=(h,e,f)$ be a real homogeneous $\ssl_2$-triple containing $e$, where
$h\in \hat\g_0$, $f\in\hat\g_{-1}$.
We write
\[ \GG_e=\Stab_{\GG}(e),\quad \ \GG'_e=\Stab_{\GG'}(e)
         \quad\ \GG''_e=\Stab_{\GG''}(e),\quad\  \GG''_t=\Stab_{\GG''}(t) .\]

Let $e\in \YY$. We denote by $\g_t''$  the centralizer (stabilizer)
of a real $\ssl_2$-triple  $t=(h,e,f)$ in the real Lie algebra
$\g''=\Lie \GG''= \ssl(m,\R)\times \ssl(n,\R)\times \R$.
These stabilizers for the real orbits are tabulated in fourth column of each of the  Tables 1--3, where
by $\t$ we denote the  Lie algebra of a one-dimensional {\em split} torus of $\GG$,
and by $\ug$ we denote the Lie algebra of a one-dimensional {\em compact} torus of $\GG$.

\begin{lemma}\label{l:using}
Assume that $e\in \YY$, and consider $\GG_e$, which is a real algebraic group.
Then there is a canonical bijection between $\Hr^1 \GG_e$ and the set of
real orbits (the orbits of $\GG(\R)$ in $\YY$) contained in the complex orbit $G\cdot e$.
\end{lemma}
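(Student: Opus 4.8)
The plan is to identify $\Hr^1\GG_e$ with the set of $\GG(\R)$-orbits inside $G\cdot e$ via the standard twisting/fibration correspondence from non-abelian Galois cohomology. The key observation is that the complex orbit $G\cdot e$ is a homogeneous space $G/G_e$, and the real points of this homogeneous space that lie in $\YY$ are precisely the real tensors in the complex orbit. The $\GG(\R)$-orbits on these real points are classified by the kernel of the map $\Hr^1\GG_e\to\Hr^1\GG$, so the content of the lemma is that this map is trivial, equivalently that every cocycle class in $\Hr^1\GG_e$ survives and gives a genuine real orbit.

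First I would invoke the exact sequence of pointed sets associated to the inclusion $\GG_e\into\GG$ acting on the orbit $\GG/\GG_e\cong G\cdot e$. The general principle (see Serre \cite[Section I.5]{Serre}) states that for a real algebraic group $\GG$ acting on a variety with the orbit of $e$ being $\GG/\GG_e$, there is a bijection between the set of $\GG(\R)$-orbits on $(G\cdot e)^\Gamma=(G\cdot e)\cap\YY$ and the kernel of the induced map $\Hr^1\GG_e\to\Hr^1\GG$. Concretely, given a real tensor $e'\in G\cdot e$, one writes $e'=g\cdot e$ for some $g\in G$; applying $\gamma$ and using that $e'$ is real gives that $z\coloneqq g^{-1}\cdot\upgam g$ lies in $\GG_e$ and is a $1$-cocycle, and the class of $z$ in $\Hr^1\GG_e$ is the invariant attached to the real orbit of $e'$. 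Two real tensors lie in the same $\GG(\R)$-orbit exactly when their cocycles are equivalent in $\Zr^1\GG_e$.

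The second step, which is the crux, is to show $\Hr^1\GG=\{1\}$ so that the map $\Hr^1\GG_e\to\Hr^1\GG$ has trivial target and hence every class in $\Hr^1\GG_e$ lifts to a real orbit. Here $\GG=\GL_{m,\R}\times\GL_{n,\R}$, and by Hilbert's Theorem 90 we have $\Hr^1(\R,\GL_{r,\R})=\{1\}$ for every $r$ (the Galois cohomology of the general linear group is trivial over any field). Therefore $\Hr^1\GG=\Hr^1\GL_{m,\R}\times\Hr^1\GL_{n,\R}=\{1\}$. Combined with the previous paragraph, surjectivity of the connecting map $\Hr^1\GG_e\to\{\text{real orbits}\}$ and triviality of $\Hr^1\GG$ show that the map from $\Hr^1\GG_e$ to the set of real orbits in $G\cdot e$ is a bijection, establishing the lemma.

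I expect the main subtlety to be bookkeeping rather than deep geometry: one must check that the orbit map $G\to G\cdot e$, $g\mapsto g\cdot e$, realizes $G\cdot e$ as the homogeneous space $G/G_e$ compatibly with the real structures, so that the twisting argument applies verbatim at the level of $\R$-groups. This amounts to verifying that $\GG_e$ is a genuine real algebraic subgroup (its defining equations have real coefficients since $e\in\YY$ is real and $\GG$ is defined over $\R$) and that $\GG(\R)$ acts transitively on each fibre of the cocycle map over a fixed class. Both are routine once the general machinery of \cite[Section I.5]{Serre} is in place, so the essential input is simply the vanishing of $\Hr^1\GG$ via Hilbert 90; no case analysis is needed for the lemma itself.
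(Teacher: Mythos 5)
Your proposal is correct and follows essentially the same route as the paper: the paper likewise invokes the standard orbit--cocycle correspondence (Serre, \cite[Section I.5.4, Proposition 36]{Serre}) to identify the set of $\GG(\R)$-orbits in $G\cdot e\cap\YY$ with $\ker[\Hr^1\GG_e\to\Hr^1\GG]$, and then concludes by the vanishing $\Hr^1\GG=1$ for $\GG=\GL_{m,\R}\times\GL_{n,\R}$ via Hilbert~90. The only cosmetic difference is that the paper describes the map in the direction from cohomology classes to real orbits (choosing $g$ with $\upgam g=gz$ and sending $[z]$ to the orbit of $g\cdot e$), whereas you describe its inverse.
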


\begin{proof}
By Serre \cite[Section I.5.4, Proposition 36]{Serre},
see also \cite[Proposition 3.6.5]{BGLa},
we have a canonical bijection between
$\ker[\Hr^1 \GG_e\to\Hr^1 \GG]$
and the set of orbits of $\GG(\R)$ in $G\cdot e\cap \YY$.
Moreover, since $\GG=\GL_{m,\R}\times\GL_{n,\R}$,
we have $\Hr^1 \GG=1$ (see Serre \cite[Section X.1, Proposition 3]{SerreLF},
and the lemma follows.
\end{proof}

We specify the map of the lemma. Recall that $\Hr^1 \GG_e=\Zr^1 \GG_e/\!\sim$
where $\Zr^1 \GG_e$ is the set of 1-cocycles; see Section \ref{s:real-alg}.
Here we write $z\sim z'$ if there exists $g\in G_e$ such that \,$z= g^{-1}z'\,\upgam g$.
Let $[z]\in \Hr^1 \GG_e$ be the cohomology class represented by a cocycle $z$.
Since $\Hr^1 \GG=1$,
there exists $g\in G$ such that
\[
z=g^{-1}\cdot 1\cdot\upgam g,\quad\text{that is,}\quad \upgam g=gz.
\]
One can easily find such an element $g$ using computer or by hand.
We have
\[\upgam(g\cdot e)=\upgam g\cdot  e=g\hs z\cdot e=g\cdot e,\]
because $z\in G_e$. Thus $g\cdot e$ is real (contained in $\YY$),
and to $[z]$ we assign the $\GG(\R)$-orbit of $g\cdot e$.

\section{The stabilizer of $e$ and the centralizer of $t=(h,e,f)$}
\label{s:e-t}

By Lemma \ref{l:using} we can use $\Hr^1 \GG_e$
in order to classify orbits of $\GG(\R)$ in $\YY$.
Therefore, we need to compute $\Hr^1\GG_e$.
To compute $\Hr^1 \GG_e$\hs, we embed $e$ into am $\ssl_2$-triple $t=(e,h,f)$
as in Section \ref{s:Galois}.
We compute $\GG''_t$ using the theory of $\theta$-groups.
Below we describe some  relations between $\GG''_t$ and $\GG_e$\hs.

 By \cite[Theorem 4.3.16]{BGLa} we have
\begin{equation}\label{e:Le}
\GG''_e=R_u(\GG''_e)\rtimes \GG''_t
\end{equation}
where $R_u$ denotes the unipotent radical.
By Sansuc's lemma
(\cite[Lemma 1.13]{Sansuc}, see also \cite[Proposition 3.2]{BDR} and \cite[Proposition 7.1]{BG})
the inclusion $\GG''_t\into \GG''_e$ induces a bijection  $\Hr^1\hs \GG''_t=\Hr^1\hs \GG''_e$\hs.

\begin{lemma}\label{l:ker-p}
For the homomorphism $p$ of \eqref{e:p}, we have $\ker p\subseteq Z(G''_t)$
where $ Z(G''_t)$ denotes the center of $G''_t$\hs.
\end{lemma}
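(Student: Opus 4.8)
The plan is to split the inclusion $\ker p\subseteq Z(G''_t)$ into two assertions: that each element of $\ker p$ lies in the stabilizer $G''_t$ of the triple $t=(h,e,f)$, and that each such element commutes with all of $G''_t$. The second assertion becomes automatic once I observe that $\ker p$ is central not merely in $G''_t$ but in the whole group $G''$; so the real content is the inclusion $\ker p\subseteq G''_t$.

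First I would check $\ker p\subseteq Z(G'')$. A general element of $\ker p$ is $(I_m,\lambda^{-1}I_n,\lambda)$ with $\lambda^n=1$. Here $\lambda^{-1}I_n$ is a scalar matrix of determinant $\lambda^{-n}=1$, hence lies in the center $\mu_n$ of $\SL(n,\C)$, while $I_m\in Z(\SL(m,\C))$ and $\lambda\in\C^\times=Z(\C^\times)$. Thus the element lies in $Z(\SL(m,\C))\times Z(\SL(n,\C))\times\C^\times=Z(G'')$, and in particular commutes with every element of the subgroup $G''_t$.

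Next I would verify that every $z\in\ker p$ fixes the triple $t=(h,e,f)$ for the action of $G''$ on $\hat\g$; recall this action is by Lie algebra automorphisms, since it factors through the image of $G''$ in $\Aut(\hat\g)$ (Section \ref{s:theta}). Because $z$ is central in $G''$, it acts trivially in the adjoint representation on $\hat\g_0\cong\Lie G''$, so $z\cdot h=h$. For $e\in\hat\g_1\cong Y=(\bigwedge^2\C^m)^*\otimes\C^n$ I would apply the explicit action formula of Section \ref{s:theta}: writing $z=(I_m,\lambda^{-1}I_n,\lambda)$, we get $z\cdot(\phi\otimes v)=\lambda\,\phi\otimes\lambda^{-1}v=\phi\otimes v$, so $z$ acts as the identity on all of $\hat\g_1$ and in particular $z\cdot e=e$. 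Finally, the nilnegative element $f$ is the unique element of $\hat\g$ satisfying $[h,f]=-2f$ and $[e,f]=h$ (uniqueness of the bottom of an $\ssl_2$-triple determined by $h$ and $e$). Since $z$ fixes $h$ and $e$ and acts by automorphisms, $z\cdot f$ satisfies the same two relations, whence $z\cdot f=f$. Hence $z$ fixes $(h,e,f)$, i.e. $z\in G''_t$.

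Putting the two steps together yields $\ker p\subseteq G''_t\cap Z(G'')\subseteq Z(G''_t)$, as required. I expect no substantial obstacle: the module computation on $\hat\g_1$ is immediate, and the only points needing a word of care are that the $G''$-action on $\hat\g$ is by automorphisms (so the uniqueness argument for $f$ is legitimate) and that the center acts trivially on $\hat\g_0$. If one prefers to avoid the uniqueness argument for $f$, one can instead note that $z$ acts trivially on $\hat\g_1$ and, by the invariance and nondegeneracy of the Killing pairing $\hat\g_1\times\hat\g_{-1}\to\C$, therefore also trivially on $\hat\g_{-1}$, which contains $f$.
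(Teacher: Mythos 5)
Your proof is correct, but it follows a genuinely different route from the paper's. The paper first notes that $\ker p\subseteq G''_e$ and $\ker p\subseteq Z(G'')$, hence $\ker p\subseteq Z(G''_e)$; it then invokes the decomposition \eqref{e:Le} together with a conjugacy theorem for semisimple elements (Hochschild, Theorem VIII.4.3) to conjugate a given $x\in\ker p$ into the reductive part $G''_t$, and uses centrality in $G''_e$ to conclude that the conjugation is trivial, so $x\in G''_t$ and hence $x\in Z(G''_t)$. You instead verify directly that every $z\in\ker p$ fixes the entire triple $t=(h,e,f)$: triviality of the adjoint action of the center handles $h$, the explicit module formula on $(\bigwedge^2\C^m)^*\otimes\C^n$ handles $e$, and the uniqueness of the nilnegative element of an $\ssl_2$-triple (or, alternatively, the invariant nondegenerate pairing $\hat\g_1\times\hat\g_{-1}\to\C$) handles $f$. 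All three steps check out; note in passing that your argument shows $\ker p$ acts trivially on $\hat\g_{-1}\oplus\hat\g_0\oplus\hat\g_1$, so it stabilizes \emph{every} homogeneous $\ssl_2$-triple, not just $t$. The trade-off is that the paper's proof is shorter once \eqref{e:Le} and the Hochschild reference are available, whereas yours is more elementary and self-contained, avoiding both the Levi decomposition of $G''_e$ and the conjugacy theorem, at the cost of a small amount of $\ssl_2$-theory.
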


\begin{proof}
Clearly, $\ker p\subset G''_e$\hs. Since $\ker p \subseteq Z(G'')$,
we have $\ker p\subseteq Z(G''_e)$.
Each element $x\in \ker p$ is semisimple, and in view of \eqref{e:Le}
there exists $g\in G''_e$ such that $gxg^{-1}\in G''_t$;
see Hochschild \cite[Theorem VIII.4.3]{Hochschild}.
Since $x\in Z(G''_e)$, we see that $x\in G''_t$
and  that $x\in Z(G''_t)$\hs, as required.
\end{proof}

Since the homomorphism $p$ is surjective, from \eqref{e:Le} we obtain that
\begin{equation}\label{e:p-G''e}
G'_e=p(G''_e)=p\big( R_u(G''_e)\rtimes G''_t\big)
     = p\big(R_u(G''_e)\big)\rtimes p(G''_t),
\end{equation}
where $p\big(R_u(G''_e)\big)\cong R_u(G''_e)$ because
by Lemma \ref{l:ker-p} \emph{}we have $\ker p\subset G''_t$.

Now consider $G_e$. Write
\[\DD=\big\{ (x I_m,\hs x^{2} I_n)\ \big|\ x\in \C^\times\big\}\subset \GG.\]

\begin{lemma}\label{l:D-G'e}
$G_e=D\hm\cdot G'_e$\hs.
\end{lemma}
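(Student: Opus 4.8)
The plan is to prove the two inclusions of $G_e=D\cdot G'_e$ separately, the forward inclusion $D\cdot G'_e\subseteq G_e$ being essentially formal and the reverse one resting on a scalar extraction over $\C$. First I would observe that the elements of $D$ are pairs of scalar matrices and hence lie in the center $Z(G)$ of $G=\GL(m,\C)\times\GL(n,\C)$; in particular $D$ is central, so $D\cdot G'_e$ is indeed a subgroup of $G$. Since $\GG'\subseteq\GG$ yields $G'_e=G'\cap G_e\subseteq G_e$, the inclusion $D\cdot G'_e\subseteq G_e$ will follow at once once I check that $D\subseteq G_e$.

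To verify $D\subseteq G_e$, I would compute the scalar by which an element of $D$ acts on $Y=\Hom(\bigwedge^2\C^m,\C^n)$. Here $(a,b)\in\GL(m,\C)\times\GL(n,\C)$ acts by $\phi\mapsto b\circ\phi\circ(\bigwedge^2 a)^{-1}$. For the element $(xI_m,\hs x^2I_n)\in D$, the matrix $xI_m$ acts on $\bigwedge^2\C^m$ as the scalar $x^2$, so precomposition with $(\bigwedge^2(xI_m))^{-1}$ contributes the factor $x^{-2}$, while postcomposition with $x^2I_n$ contributes $x^2$; the net scalar is $x^{-2}\cdot x^2=1$. Thus every element of $D$ acts trivially on all of $Y$, and in particular fixes $e$, giving $D\subseteq G_e$. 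This is exactly the point of the exponents $(1,2)$ in the definition of $\DD$.

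For the reverse inclusion, I would take $(a,b)\in G_e$ with $a\in\GL(m,\C)$ and $b\in\GL(n,\C)$, and use that $\C$ is algebraically closed to choose $x\in\C^\times$ with $x^m=\det a$. Setting $a'=x^{-1}a$ and $b'=x^{-2}b$, one has $\det a'=1$, so $(a',b')\in\SL(m,\C)\times\GL(n,\C)=G'$, and $(a,b)=(xI_m,x^2I_n)\cdot(a',b')$. Finally $(a',b')=(xI_m,x^2I_n)^{-1}\cdot(a,b)$ is a product of two elements of $G_e$, hence fixes $e$, so $(a',b')\in G'_e$ and $(a,b)\in D\cdot G'_e$. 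The only step requiring care is the passage from ``acts by the scalar $1$'' to ``fixes $e$'' in the second paragraph, which is legitimate because $e\neq 0$ (indeed $e$ is non-degenerate); beyond this there is no genuine obstacle, the whole argument being the scalar bookkeeping together with the extraction of an $m$-th root of $\det a$.
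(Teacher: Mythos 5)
Your proof is correct and takes essentially the same route as the paper: the paper likewise notes $D\subseteq G_e$ (which you justify by the scalar computation $x^{-2}\cdot x^{2}=1$ that the paper leaves implicit as ``clearly'') and then extracts an $m$-th root $x$ of $\det(g_m)$ to write $g=d\cdot g'$ with $d\in D$ and $g'=d^{-1}g\in G'_e$. Your closing caveat about needing $e\neq 0$ is superfluous, since acting by the scalar $1$ means acting as the identity on all of $Y$.
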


\begin{proof}
Clearly $D\subseteq G_e$ for any $e\in\Hom\big(\bigwedge^2 \C^m,  \C^n\big)$,
whence $D\hm\cdot G'_e\subseteq G_e$.
Conversely, if $g=(g_m,g_n)\in G_e\subset\GL(m,\C)\times\GL(n,\C)$, we choose $x\in \C^\times$
such that $x^m=\det(g_m)$\hs, and we set
\[d=(xI_m,\hs x^{2} I_n), \quad g'=(x^{-1}g_m\hs,\hs x^{-2} g_n)=d^{-1} g.\]
Then $\det(x^{-1} g_m)=1$, and therefore we have $g'\in G'$.
Since $g'=d^{-1}g$ where $d\cdot e=e$ and  $g\cdot e=e$, we see that $g'\cdot e = e$.
Thus
$g'\in G'_e$\hs, $d\in D$, and $g=d g'$, as required.
\end{proof}

We denote $\PP_t=\DD\cdot p(\GG''_t)$.

\begin{corollary}\label{c:G''e}
$G_e=p\big(R_u(G''_e)\big)\rtimes P_t$.
\end{corollary}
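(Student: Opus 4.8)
The plan is to combine the three immediately preceding structural results: the semidirect product decomposition \eqref{e:Le} (equivalently its image \eqref{e:p-G''e}), Lemma \ref{l:D-G'e}, and the definition $\PP_t=\DD\cdot p(\GG''_t)$. First I would recall from \eqref{e:p-G''e} that
\[
G'_e=p\big(R_u(G''_e)\big)\rtimes p(G''_t),
\]
and that $p$ is injective on $G''_t$ (indeed on all of $G''_e$) by Lemma \ref{l:ker-p}, so that $p\big(R_u(G''_e)\big)\cong R_u(G''_e)$ is unipotent and $p(G''_t)$ is reductive. Then by Lemma \ref{l:D-G'e} we have $G_e=D\cdot G'_e$, and substituting the decomposition of $G'_e$ gives
\[
G_e=D\cdot\Big(p\big(R_u(G''_e)\big)\rtimes p(G''_t)\Big).
\]

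The key step is to rearrange this into $p\big(R_u(G''_e)\big)\rtimes P_t$ with $P_t=D\cdot p(G''_t)$. Since $D$ consists of central scalar pairs $(xI_m,x^2I_n)$, conjugation by any element of $D$ acts trivially on $G$, so $D$ normalizes every subgroup in sight; in particular $D$ commutes with $p\big(R_u(G''_e)\big)$ and normalizes $p(G''_t)$. I would use this to show that the product $D\cdot G'_e$ can be regrouped as $p\big(R_u(G''_e)\big)\cdot\big(D\cdot p(G''_t)\big)=p\big(R_u(G''_e)\big)\cdot P_t$. The semidirect product structure then follows once I check that $p\big(R_u(G''_e)\big)$ is normal in $G_e$ and that it meets $P_t$ trivially: normality holds because $R_u(G''_e)$ is the unipotent radical of $G''_e$, hence characteristic, and is preserved under the surjection $p$ and under conjugation by $D$; the trivial intersection holds because $p\big(R_u(G''_e)\big)$ is unipotent while $P_t$ is reductive (being generated by the reductive group $p(G''_t)$ and the central torus $D$), so their intersection is both unipotent and reductive, hence trivial.

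The main obstacle I anticipate is verifying cleanly that $P_t=D\cdot p(G''_t)$ is reductive and that $p\big(R_u(G''_e)\big)$ is genuinely normal in the larger group $G_e$, rather than merely in $G'_e$. For normality, the point is that $G_e=D\cdot G'_e$ and $D$ is central in $G$, so normality in $G'_e$ together with $D$-invariance of $R_u(G''_e)$ upgrades to normality in $G_e$; I would spell this out using that $R_u(G''_e)$ is characteristic in $G''_e$ and that $D$ acts trivially by conjugation. For reductivity of $P_t$, I would note that $D$ is a central torus and $p(G''_t)$ is reductive (the isomorphic image of the reductive group $G''_t$), so their product is again reductive. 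Everything else is the formal bookkeeping of assembling $G_e=D\cdot G'_e$ with \eqref{e:p-G''e}, and the semidirect product assertion is then immediate from the Levi-type decomposition.
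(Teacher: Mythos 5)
Your argument is correct and is essentially the paper's own proof: the paper likewise combines Lemma \ref{l:D-G'e} with \eqref{e:p-G''e} and regroups the central torus $D$ with $p(G''_t)$ to form $P_t$, the semidirect-product structure surviving because $D$ is central and $p\big(R_u(G''_e)\big)$ is unipotent while $P_t$ is reductive. One slip worth fixing: $p$ is \emph{not} injective on $G''_t$ or on $G''_e$ --- Lemma \ref{l:ker-p} says precisely that $\ker p\cong\mu_n$ sits \emph{inside} $Z(G''_t)$; what is true (and what the paper uses) is that $p$ is injective on $R_u(G''_e)$, since $\ker p\subseteq G''_t$ meets the unipotent radical trivially, and $p(G''_t)$ is reductive as the image (not an isomorphic copy) of the reductive group $G''_t$.
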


\begin{proof} We use Lemma \ref{l:D-G'e}  and formula \eqref{e:p-G''e}, and obtain
\[G_e=D\hm\cdot G'_e=D\hm\cdot \big(\, p(R_u(G''_e))\rtimes p(G''_t)\,\big)=
    p\big(R_u(G''_e)\big)\rtimes \big(D\hm\cdot p(G''_t)\big)=p\big(R_u(G''_e)\big)\rtimes P_t,\]
as required.
\end{proof}

\begin{proposition}
\label{p:H1}
The inclusion $\PP_t\into \GG_e$ induces a bijection
$$\Hr^1\PP_t=\Hr^1\GG_e\hs.$$
\end{proposition}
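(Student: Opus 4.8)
The plan is to deduce the proposition directly from the semidirect-product decomposition established in Corollary \ref{c:G''e}, in exactly the same way Sansuc's lemma was applied earlier to the inclusion $\GG''_t\into\GG''_e$. Recall that Corollary \ref{c:G''e} gives the decomposition of \emph{complex} algebraic groups
\[ G_e=p\big(R_u(G''_e)\big)\rtimes P_t, \]
in which the normal factor $p(R_u(G''_e))$ is unipotent, being the image of a unipotent group under the algebraic homomorphism $p$. The only thing that needs checking before Sansuc's lemma can be invoked is that this is a decomposition of \emph{real} algebraic groups, i.e.\ that each factor is stable under the real structure $\sigma_\GG$ on $G_e$ and that $p(R_u(\GG''_e))$ is precisely the unipotent radical $R_u(\GG_e)$, with $\PP_t$ a complement.

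First I would verify that all three groups descend to $\R$. Since $\GG''_e$ is a real algebraic group and we are in characteristic $0$, its unipotent radical $R_u(\GG''_e)$ is defined over $\R$; as $p$ is a homomorphism of real groups, $p(R_u(\GG''_e))$ is again a real unipotent subgroup. For the complement, $\DD$ is defined over $\R$ (it is the image of $\GmR$ under the $\R$-homomorphism $x\mapsto(xI_m,x^2I_n)$), and $\GG''_t=\Stab_{\GG''}(t)$ is real because $t=(h,e,f)$ is a \emph{real} $\ssl_2$-triple, so $\PP_t=\DD\cdot p(\GG''_t)$ is a real subgroup. Consequently the complex equality of Corollary \ref{c:G''e} is $\sigma_\GG$-equivariant and descends to a decomposition of real algebraic groups
\[ \GG_e=p\big(R_u(\GG''_e)\big)\rtimes\PP_t. \]
Next I would identify $p(R_u(\GG''_e))$ with $R_u(\GG_e)$: by \eqref{e:Le} the two factors of $G''_e=R_u(G''_e)\rtimes G''_t$ intersect trivially, and by Lemma \ref{l:ker-p} we have $\ker p\subseteq G''_t$; hence $\ker p\cap R_u(G''_e)=1$, so $p$ is injective on the unipotent radical and $p(R_u(\GG''_e))\cong R_u(\GG''_e)$ is unipotent. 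Being normal with the reductive complement $\PP_t$, it is the unipotent radical of $\GG_e$.

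With the real semidirect decomposition $\GG_e=R_u(\GG_e)\rtimes\PP_t$ in hand, I would conclude by applying Sansuc's lemma (\cite[Lemma 1.13]{Sansuc}; see also \cite[Proposition 3.2]{BDR} and \cite[Proposition 7.1]{BG}), which asserts that for a real algebraic group presented as a semidirect product of its unipotent radical with a closed subgroup, the inclusion of that subgroup induces a bijection on $\Hr^1$; this yields $\Hr^1\PP_t=\Hr^1\GG_e$. The genuinely delicate step is the descent in the second paragraph: one must be certain that the factorization of Corollary \ref{c:G''e}, proved over $\C$, is compatible with the real structure, and in particular that $\PP_t$ is $\sigma_\GG$-stable. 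As indicated, this reduces to the reality of $\DD$ and of the $\ssl_2$-triple $t$, after which everything else is a direct reuse of the Sansuc argument already employed for $\GG''_t\into\GG''_e$.
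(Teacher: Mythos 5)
Your proposal is correct and follows essentially the same route as the paper: the paper's proof likewise applies Sansuc's lemma to the decomposition $G_e=p\big(R_u(\GG''_e)\big)\rtimes \PP_t$ from Corollary \ref{c:G''e}, using that the normal factor is unipotent. The additional verifications you supply (that the decomposition is defined over $\R$ and that the unipotent factor is the unipotent radical) are left implicit in the paper but are consistent with its setup.
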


\begin{proof}
Since the group  $p\big(R_u(\GG''_e)\big)$ is unipotent, by Sansuc's lemma we have
\[\Hr^1\PP_t=\Hr^1\big( p\big(R_u(\GG''_e)\big)\rtimes \PP_t\big).\]
By  Corollary \ref{c:G''e} we have
\[\Hr^1\big( p\big(R_u(\GG''_e)\big)\rtimes \PP_t\big)=\Hr^1\GG_e\hs,\]
as required.
\end{proof}

In order to compute $\Hr^1\PP_t$
we need the Lie algebra of $\PP_t$.
We have an isomorphism $\Lie G''_t\to \Lie (G''_t/\ker p)$ given by
\[(g_m, g_n, c_1)\mapsto (g_m, c_1 I_n+g_n).\]
Then  $\Lie P_t=\Lie D+\Lie \big( p(G''_t)\big) $,
where $\Lie D=\C\cdot (I_m,2I_n)$.
Write $\g''=\Lie(G'')$, $\g'=\Lie(G')$. For the differential
$d p :\g'' \to \g'$ we have $d p(a_m,b_n,c_1) = (a_m,b_n+c_1)$. Now
$\Lie(p(G_t'')) = dp(\Lie(G_t'')) = dp (\g_t'')$, where $\g_t''$ is the
centralizer of $t$ in $\g''$.
 We see that it is straightforward to compute using computer the Lie subalgebra
\[\Lie P_t\subset\gl(m,\C)\times\gl(n,\C).\]

For classification of $\GG(\R)$-orbits in $\YY=\Hom\big(\bigwedge^2 \R^m, \R^n\big)$,
we need $\Hr^1 \GG_e$. For this end we need a real basis of $\Lie P_t$
(which is computed by our program)
and the component group $\pi_0(P_t)$.

\begin{proposition}
\label{p:varphi-surjective}
Consider the composite homomorphism
\[ \varphi\colon G''_t\onto p(G''_t)\into P_t.\]
Then the induced homomorphism
$\varphi_*\colon\pi_0(G''_t)\to\pi_0(P_t)$
is surjective.
\end{proposition}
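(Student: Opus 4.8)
The plan is to show that $\varphi_*$ is surjective by analyzing what $P_t$ is modulo its identity component. Recall from the definition that $P_t = D\cdot p(G''_t)$. Since $\pi_0(P_t) = P_t/P_t^0$, it suffices to show that every connected component of $P_t$ meets the image $\varphi(G''_t) = D^0 \cdot p(G''_t)$... but more carefully, I want to show every component of $P_t$ contains a point of $p(G''_t)$. The key observation is that $D$ is connected: $D = \{(xI_m, x^2 I_n) \mid x\in\C^\times\}$ is the image of the connected group $\C^\times$, hence $D\subseteq P_t^0$.

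First I would establish that $D$ is connected and therefore $D\subseteq P_t^0$. This is immediate: $D$ is a continuous image of $\C^\times$, which is connected, so $D$ lies in a single component of $P_t$, namely the identity component. Consequently, for the purpose of computing $\pi_0$, the factor $D$ is invisible. Next I would use this to conclude that the inclusion $p(G''_t)\into P_t = D\cdot p(G''_t)$ is surjective on component groups. Indeed, any element of $P_t$ has the form $d\cdot q$ with $d\in D\subseteq P_t^0$ and $q\in p(G''_t)$, so it lies in the same component of $P_t$ as $q$; hence every component of $P_t$ meets $p(G''_t)$, which says precisely that $\pi_0(p(G''_t))\to\pi_0(P_t)$ is surjective.

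Finally I would assemble the factorization $\varphi\colon G''_t\onto p(G''_t)\into P_t$. The first map is surjective as a map of groups, so it is in particular surjective on components, giving a surjection $\pi_0(G''_t)\onto\pi_0(p(G''_t))$. Composing with the surjection $\pi_0(p(G''_t))\onto\pi_0(P_t)$ from the previous step yields that $\varphi_*\colon\pi_0(G''_t)\to\pi_0(P_t)$ is surjective, as desired. More directly, every element of $P_t$ is of the form $d\cdot p(s)$ with $d\in D\subseteq P_t^0$ and $s\in G''_t$, so it lies in the same component as $\varphi(s)$, whence $\varphi_*$ is surjective.

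The only point requiring genuine care—though it is not really an obstacle—is the passage from ``surjective homomorphism of algebraic groups'' to ``surjective on $\pi_0$.'' The composite $\varphi$ itself need not be surjective as a homomorphism of groups (its image is $D^0\cdot p(G''_t)$, which omits the disconnected part of $D$ only if $D$ were disconnected, but $D$ is connected so this is moot), yet the argument above shows its image meets every component of $P_t$, which is exactly what surjectivity of $\pi_0$ requires. I expect the cleanest writeup to avoid invoking the factorization abstractly and instead argue directly with the decomposition $P_t = D\cdot p(G''_t)$ together with the connectedness of $D$.
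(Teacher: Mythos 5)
Your proof is correct and follows essentially the same route as the paper: both arguments rest on the decomposition $P_t=D\cdot p(G''_t)$ together with the connectedness of $D$, the paper packaging this as the surjective homomorphism $D\times G''_t\to P_t$ (which is automatically surjective on $\pi_0$, with $\pi_0(D\times G''_t)\cong\pi_0(G''_t)$), while you argue element-wise that $D\subseteq P_t^0$ forces every component of $P_t$ to meet $p(G''_t)$. The only blemish is the side remark identifying the image of $\varphi$ as $D^0\cdot p(G''_t)$ rather than $p(G''_t)$, which does not affect the argument.
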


\begin{proof}
The homomorphism
$D\times G''_t\to D\hm\cdot p(G''_t)=P_t$ is surjective. It follows that the homomorphism of the component groups
\[ \pi_0(G''_t)\isoto\pi_0(D\times G''_t)\to \pi_0(P_t)\]
is surjective, as required.
\end{proof}

We have reduced computing $\Hr^1\GG_e$ to computing $\pi_0(P_t)$.
Moreover, in view of Proposition \ref{p:varphi-surjective},
computing $\pi_0(P_t)$ reduces to computing $\pi_0(G''_t)$\hs;
see the end of Section \ref{s:Computing-H1} below.

\section{Computing $\Hr^1\GG_e$}
\label{s:Computing-H1}

We start with a tensor $e$ from Tables 1--3. These are the tensors
in the rows 1--6 (but not 1-bis) in Table \ref{tab:reps(6,2)},
in the rows 1--12 in Table \ref{tab:reps(5,3)},
and in the rows  1--3 in Table \ref{tab:reps(4,4)}.
We can compute the stabilizer $\g_e=\Lie  G_e$ of our tensor $e$
in the real Lie algebra $\Lie \GG=\gl(m,\R)\times \gl(n,\R)$.
This is a linear problem, and we can easily do that using computer.

We need the Galois cohomology $\Hr^1\GG_e\coloneqq\Hr^1(\R, \GG_e)$.
We have a computer program \cite{dG-GalCohom} described in \cite{BG}
for calculating the Galois cohomology $\Hr^1\HH$
of a real linear algebraic group $\HH$, not necessarily connected or reductive.
We call this program ``the $\Hr^1$-program''.
We assume that $\HH\subseteq \GL(N,\C)$ (for some natural number $N$)
is a real algebraic subgroup
(that is, defined by polynomial equations with real coefficients).
The input of the $\Hr^1$-program is the real Lie algebra $\Lie \HH\subset \gl(N,\R)$
given by a (real) linear basis, and the component group $\pi_0(H)$
given by a set of representatives $h_1,\dots,h_r\in \GL(N,\C)$.

However, when working on this paper, we had only an older version of the $\Hr^1$-program.
This older $\Hr^1$-program computes $\Hr^1\HH$ when $\HH$ is {\em reductive} (not necessarily connected).
Therefore, we reduce our calculation of $\Hr^1\GG_e$ to the reductive case, see below.

Using computer, we embed our tensor $e$ into a homogeneous $\ssl_2$-triple $(h,e,f)$
 in a $\Z$-graded  Lie algebra as in Section \ref{s:theta},
and we consider the reductive group $\PP_t=\DD\cdot p(\GG''_t)\subset \GG_e$, not necessarily connected;
see Section \ref{s:e-t}.
By Proposition \ref{p:H1} we have a canonical bijection $\Hr^1\PP_t\isoto \Hr^1\GG_e$.
It remains to compute $\Hr^1\PP_t$.

In order to compute  $\Hr^1\PP_t$ using the older $\Hr^1$-program,
we need a basis of the real Lie algebra $\Lie(\PP_t)$
and the component group $\pi_0(P_t)$.
Using computer, we can easily compute a basis of $\Lie(\PP_t)$.

It remains to compute $\pi_0(P_t)$.
By Proposition \ref{p:varphi-surjective},
the homomorphism $\pi_0(G''_t)\to \pi_0(P_t)$ is surjective.
It remains to find representatives of $\pi_0(G''_t)$
up to equivalence in $P_t$. Here we say that $g_1,g_2\in G''_t$ are equivalent in $P_t$
if  $p(g_1)p(g_2)^{-1}$ is contained in the identity component $P_t^0$ of $P_t$\hs.

We have a computer program checking whether an element $h$
of a reductive algebraic group $H\subset \GL(N,\C)$
(not necessarily connected) with Lie algebra $\h$
is contained in the identity component $H^0$ of $H$.
Using this program, from a set of representatives of $\pi_0(G''_t)$
we obtain a set of representatives of $\pi_0(P_t)$.
It remains to compute $\pi_0(G''_t)$.

\section{Computing $\pi_0(G''_t)$}

The group $G''_t$ acts by conjugation on its identity component $\Gtz$,
whence we obtain a homomorphism
\[\pi_0(G''_t)= G''_t/\Gtz\,\to\,\Aut \Gtz/\Inn \Gtz= \Out \Gtz,\]
where $\Inn \Gtz$ denotes the group of inner automorphisms of $\Gtz$.
The group of outer automorphisms $\Out \Gtz$ naturally acts
on the based root datum $\BRD(\Gtz)$.
In particular, it acts on the Dynkin diagram $\cD=\Dyn \Gtz$
and on the free abelian group $X^\vee_Z=X_*(Z(\Gtz))$;
see Section \ref{s:BRD}.
We obtain a homomorphism
\begin{equation}\label{e:Aut-Aut}
 \pi_0(G''_t)\to \Aut(\Dyn \Gtz)\times \Aut(X_Z^\vee).
 \end{equation}
The action of $\pi_0(G''_t)$ on $\Dyn (G''_t)^0$ preserves
the sets of highest weights of the representations of the derived
Lie algebra $\g_t^{\prime\prime\hs\der}=[\g''_t\hs,\g''_t]$
in $U=\C^m$ and in $V=\C^n$.

The group  $\Aut(\Dyn \Gtz)$ in \eqref{e:Aut-Aut} is finite, but  $\Aut(X_Z^\vee)$
is infinite when $\dim Z(\Gtz)\ge2$.
We construct a finite subgroup of $\Aut(X_Z^\vee)$
containing the image of $\pi_0(G''_t)$ in  $\Aut(X_Z^\vee)$.
Consider the symmetric bilinear form on $\g$
\[\cF(x,y)=\Tr(xy),\quad\ x,y\in \g=\gl(m,\R)\times\gl(n,\R)\]
where $xy$ denotes the product of matrices.
This symmetric bilinear form is $G$-invariant, hence $G_t''$-invariant.
By abuse of notation, we denote again by $\cF$
the restriction of the form to $\z\coloneqq\Lie Z(\Gtz)$;
it is $G_t''$-invariant and hence $\pi_0(G_t'')$-invariant.

We observe that in all our examples, the bilinear form $\cF$
on the real vector space $\z$ is positive definite.
Indeed, one can see from Tables 1--3
that the center $\z$ of the Lie algebra $\g''_t=\Lie G_i''$ is split,
that is, it can be diagonalized over $\R$.
It follows that any matrix $x\in\z$ can be diagonalized over $\R$,
and therefore $\cF(x,x)=\Tr(x^2)\ge0$; moreover, if $x\neq 0$ then $\cF(x,x)=\Tr(x^2)>0$, as required.

We embed $X_Z^\vee \into\z$:
to any $\nu\in X_Z^\vee$ we assign the element $h_\nu=(d\nu)(1)\in\z$
as in Section \ref{s:BRD}.
We obtain a  $\pi_0(G_t'')$-invariant positive definite bilinear form $\cF_X$ on $X_Z^\vee$\hs:
\[\cF_X(\nu_1,\nu_2)=\cF(h_{\nu_1}\hs,h_{\nu_2})\quad\ \text{for}\ \, \nu_1,\nu_2\in X_Z^\vee\hs.\]
Since $\cF_X$ is definite, the group  $\Aut(X_Z^\vee\hs,\cF_X)$ is finite.
We can write the homomorphism \eqref{e:Aut-Aut} as
\begin{equation*}
\pi_0(G''_t)\,\to\, \Aut(\Dyn \Gtz)\times \Aut(X_Z^\vee\hs,\cF_X)
\end{equation*}
where both automorphism groups are finite.

\section{Details of computation of Tables 1-3}
\label{s:Details}

We describe the last two  columns in our Tables 1--3.
We take a  representative $e \in \YY=\Hom(\bigwedge^2 \UU,\VV)$
and embed it into a homogeneous $\ssl_2$-triple $t=(h,e,f)$.
We consider the centralizer (stabilizer) $\g_t''$; see Section \ref{s:Galois}.
The Lie algebra $\g_t''$ is reductive.
Let $\g_t''\hs^\der=[\g_t'',\g_t'']$ denote its derived subalgebra.
The inclusion homomorphism
\[ \g_t''\hs^\der\into \g_t''\into \Lie (\GG'')=\ssl(\UU)\times\ssl(\VV)\times \R\]
induces complex  representations of $\g_t''\hs^\der$ in $U$ and $V$.
We compute the highest weights of these representations and write them
in the corresponding columns ``Rep.~in $U$'' and ``Rep.~in $V$''
of the tables.

For example, in the row 1 of Table \ref{tab:reps(6,2)},
we have $\g_t''\simeq \ssl_2\times\ssl_2\times \ssl_2$,
and the representation of $\g_t''^\der=\g_t''$ in $U=\R^6$ is the direct sum
of the three 2-dimensional irreducible representations
with the highest weights $(1,0,0)$, $(0,1,0)$, and $(0,0,1)$.
The representation in $V$ is the trivial 1-dimensional irreducible representation
(with highest weight $(0,0,0)$\hs) with multiplicity 2.

Let $e$ be one of the tensors in the rows 2,\,3,\,4,\,5,\,6 of Table \ref{tab:reps(6,2)}.
Looking at the last three columns of the table,
we see that the Dynkin diagram $\Dyn\Gtz$
has no non-trivial automorphisms preserving the highest weights of
the representations in $U$ and $V$.
Thus $\pi_0(G''_t)$, when acting on $\Gtz$, acts trivially on $\Dyn\Gtz$.
Write $r$ for the semisimple rank of $\Gtz$, and let $x_1,\dots, x_r, y_1,\dots,y_r, h_1,\dots,h_r$
be canonical generators of the semisimple Lie algebra $[\g_t'',\g_t'']$ as in  Section \ref{s:theta}.
We add the equations
\[ \Ad(g)x_i=x_i,\ \Ad(g)y_i=y_i \quad\text{for}\ \, i=1,\dots, r\]
to the equations defining $G''_t$, and compute the Gr\"obner basis.
A calculation shows that the obtained subgroup
$H''=\ker\big[G''_t\to \Aut\g''_t\big]\subset G''$
is contained in the diagonal maximal torus of $G''$.
This means that $H''$ is given as the
intersection of a finite number of characters of the maximal torus.
This makes it possible to work with $H''$ using the machinery of
finitely-generated abelian groups. In particular we can use the Smith form of an
integral matrix to find generators of $\pi_0(H'')$. Here we do not go into the
details but refer to \cite[Proposition 3.9.7]{dG} and its proof.
A calculation shows that the images of all these representatives
are contained in the identity component  $P_t^0$ of $P_t$\hs.
By Corollary \ref{c:A1-Dyn} the image of  $\pi_0(H'')$
in $\pi_0(P_t)$ is the whole $\pi_0(P_t)$, and we conclude that
$\pi_0(P_t)=1$ in all these cases.
A calculation (by hand or using computer) shows
that $\Hr^1\PP_t=1$, and thus $\Hr^1\GG_e=1$, in all these cases.
Similarly, we obtain that for the cases
 $8,\hs 9,\hs 10,\hs 11$ of Table \ref{tab:reps(5,3)}
and the cases $2,\hs 3$ of Table \ref{tab:reps(4,4)} we have $\pi_0(P_t)=1$ and  $\Hr^1\GG_e=1$.
Thus the complex orbit $G\cdot e$ contains only one real orbit $\GG(\R)\cdot e$,
and the corresponding complex two-step nilpotent Lie algebra has only one real form.

In the case $1$  of Table \ref{tab:reps(5,3)} we have $\pi_0(P_t)=1$, but $\#\Hr^1\PP_t=2$.
Thus there are two real orbits in the complex orbit,
and we computed (using computer) a representative of the second real orbit;
see row 1-bis in Table \ref{tab:reps(5,3)}.

One can see that in the row 1 of Table \ref{tab:reps(6,2)},
the automorphism group of the Dynkin diagram $\Dyn\hs(G''_t)^0$
together with the highest weights of the representations in $U$ and $V$
is the symmetric group $S_3$.
A calculation shows that the homomorphisms
\[ \pi_0(P_t)\longleftarrow \pi_0(G''_t)\lra \Aut(\Dyn\Gtz)\]
are isomorphisms, whence $\pi_0(P_t)\simeq S_3$\hs.
A calculation (using the $\Hr^1$-program or by hand) shows
that $\Hr^1\GG_e=\Hr^1 \PP_t\cong \Hr^1(\Gamma, S_3)$ and $\#\Hr^1\GG_e=2$.
Thus there are two real orbits in the complex orbit  $G\cdot e$;
see row 1-bis in Table 1 for a representative of the second real orbit.

Similarly, in the row 1 of Table \ref{tab:reps(4,4)},
we have $\pi_0(G_2)\cong C_2$ (a group of order 2), and
\[\Hr^1\GG_e=\Hr^1\PP_t\cong \Hr^1(\Gamma,C_2)=\{[1],[c]\}\]
where $c\in C_2, c\neq 1$.
Again we have two real orbits in the complex orbit $G\cdot e$.

We consider the rows $2,3,4,5,6,7,11$ of Table \ref{tab:reps(5,3)}.
We see from the table that $\Gtz$ is a torus of dimension 2 or 3.
We computed the Gr\"obner basis of the equations defining $G''_t$.
In cases $4,6,7,11$ we obtain a subgroup
contained in the diagonal maximal torus of $G''$,
and the image of this subgroup in $P_t$ is contained in the identity component.
We see that  $\pi_0(P_t)=1$, and therefore $\PP_t$ is a split torus.
Thus $\Hr^1\GG_e=\Hr^1\PP_t=1$,
and the complex orbit $G\cdot e$ contains only one real orbit $\GG(\R)\cdot e$.

It remains to consider the cases $2,\,3,\,5$ of Table \ref{tab:reps(5,3)},
in which $\Gtz$ is a torus and the group $G_t''$ is not diagonal.
We provide details for the case 5; the cases 2 and 3 are similar (and easier).
In case 5 the group $\Gtz$ is a 3-dimensional torus, and hence $X_Z^\vee$ is
a free abelian group of rank 3.
We chose a basis $e_1,e_2,e_3$ of $X_Z^\vee$ and computed the Gram matrix
${\rm Gr}(\cF_X)=(a_{ij})$ where $a_{ij}=\cF_X(e_i,e_j)$.
We obtained the matrix
\[
\SmallMatrix{ 4 &2 &0\\ 2 &4 &0 \\ 0 &0 &31}.
\]
Using the function {\tt AutomorphismGroup} of Magma,
we computed the automorphism group ${\mathcal A}=\Aut(X_Z^\vee,\cF_X)$.
It is a group of order 24 with generators
\[
\SmallMatrix{0&1&0\\1&0&0\\ 0&0&1},\quad
\SmallMatrix{1&-1&0\\0&-1&0\\0&0&-1},\quad
\SmallMatrix{-1&0&0\\0&-1&0\\0&0&1}.
\]
We computed the list of elements of $\mathcal A$,
and for each $a\in \mathcal A$ we computed the Gr\"obner basis for $G_t''$
with additional equations saying
that the element $g\in G_t''$ acts on $X_Z^\vee$ as $a$.
For 18 elements $a$ we got the trivial Gr\"obner basis $\{1\}$,
which means that the corresponding enlarged system of equations has no solutions.
For the following 6 elements:
\[
\SmallMatrix{ -1& 0& 0 \\ -1& 1& 0 \\ 0& 0& 1 },\quad
\SmallMatrix{ -1& 1& 0\\ -1& 0& 0 \\ 0& 0& 1 },\quad
\SmallMatrix{ 0& -1& 0 \\ 1& -1& 0 \\ 0& 0& 1  },\quad
\SmallMatrix{0& 1& 0 \\ 1& 0& 0 \\ 0& 0& 1  },\quad
\SmallMatrix{  1& -1& 0 \\ 0& -1& 0 \\ 0& 0& 1},\quad
\SmallMatrix{ 1& 0& 0 \\ 0& 1& 0 \\ 0& 0& 1  }
\]
we obtained nontrivial Gr\"obner bases,
which meant that the corresponding enlarged system
did have a solution, and then it was easy to find a solution
by a computer-assisted calculation.
From this we obtained that $\pi_0(G_t'')\simeq S_3$
and $\Hr^1 \GG_e=\Hr^1\PP_t\simeq\Hr^1(\Gamma,S_3)$ is of cardinality 2.
Similarly, we obtained that $\pi_0(G_e)\simeq C_2$ in cases 2 and 3  of Table \ref{tab:reps(5,3)}.
In these two cases we also obtained that $\#\Hr^1\GG_e=2$.
Thus the complex orbit $G\cdot e$ contains exactly two real orbits,
and in each case  we computed a representative of the other orbit.

\appendix

\section{Signature $(4,4)$: the duality approach}
\label{app:Duality}

We use the duality approach; see Gauger \cite[Section 3]{Gauger}
or Galitski and Timashev \cite[Section 1.2]{GT}.
Let $U$ and $V$ be finite  dimensional spaces over a field $\kk$
of characteristic different from 2.
To each surjective linear map
$\beta\colon\bigwedge^2 U\to V$ we assign the natural surjective linear map
$$\beta^*\colon \bigwedge ^2 U^*\to  \bigwedge ^2 U^*/\hs(\ker\beta)^\bot$$
where $(\ker\beta)^\bot$ denotes the orthogonal complement (annihilator)
to $\ker\beta\subset \bigwedge^2 U$ in the dual space $\bigwedge^2 U^*$ to $\bigwedge^2 U$.
This approach reduces classification
of surjective skew-symmetric bilinear maps $\kk^m\times \kk^m\to \kk^{n_1}$
to classification  of surjective skew-symmetric bilinear maps $\kk^m\times\kk^m\to \kk^{n_2}$
where $n_2=\binom m2-n_1$.
When $(m,n_1)=(4,4)$, we obtain $n_2=\binom42-4=6-4=2$.
This reduces the problem of classification of two-step nilpotent Lie algebras over $\kk$
of signature $(4,4)$ to the well-known cases of signatures  (4,2) and (3,2)
(note that the signatures  (2,2) and (1,2) are impossible).

In \cite{dG-6} one can find a classification of 6-dimensional nilpotent Lie algebras
over a field $\kk$ of characteristic different from 2.
This gives, in particular, a classification of surjective skew-symmetric bilinear  maps
\[\beta\colon \kk^4\times\kk^4\to\kk^2.\]
over such fields. These are representatives of the orbits with the numbering of \cite[Section 4]{dG-6}.
\begin{align*}
&\beta_{6,8}=\ee125+\ee136 \quad\ \text{of signature}\ (3,2), \\
&\beta_{6,22}(\epsilon)=\ee125+\ee136+\epsilon\ee246+\ee345\ \ \text{for}\  \epsilon\in \kk \quad  \text{of signature}\ (4,2).
\end{align*}
Here $\beta_{6,22}(\delta)$ is equivalent to $\beta_{6,22}(\epsilon)$ if
$\delta=\alpha^2\epsilon$ for some $\alpha\in \kk^\times$.
Thus for $\kk=\C$ we obtain 3 equivalence classes with representatives
\[\beta_{6,8},\,\beta_{6,22}(0),\,\beta_{6,22}(1),\]
and for $k=\R$ we obtain 4 equivalence classes with representatives
\[\beta_{6,8},\,\beta_{6,22}(0),\,\beta_{6,22}(1),\, \beta_{6,22}(-1),\]
which is compatible with our Table \ref{tab:reps(4,4)}.
\bigskip

\noindent{\sc Data availability:}
 All data generated or analyzed during this study are included in this published article.

\end{document}